\documentclass[11pt,leqno]{amsart}
\usepackage{amsmath}
\usepackage{amsfonts}
\usepackage{amssymb}
\usepackage{graphicx}
\usepackage{hyperref} 
\usepackage{a4wide} 
\usepackage{mathrsfs} 
\usepackage{relsize} 
\usepackage{xcolor} 

\usepackage{graphicx}
\usepackage{hyperref}

\setcounter{MaxMatrixCols}{30}

\textwidth=16cm \textheight=23cm \flushbottom
\oddsidemargin=-.1cm\evensidemargin=-.1cm \voffset-2.2truecm

\usepackage{enumerate}
\usepackage[shortlabels]{enumitem}

\newtheorem{theorem}{Theorem}

\newtheorem{corollary}[theorem]{Corollary}

\newtheorem{definition}[theorem]{Definition}
\newtheorem{example}[theorem]{Example}

\newtheorem{lemma}[theorem]{Lemma}

\newtheorem{proposition}[theorem]{Proposition}
\newtheorem{remark}[theorem]{Remark}


\newcommand{\vol}{\mathrm{vol}}


\newcommand{\dive}{\operatorname{div}}
\renewcommand{\div}{\operatorname{div}}

\newcommand{\Hess}{\operatorname{Hess}}


\newcommand{\rr}{\mathbb{R}}
\renewcommand{\ss}{\mathbb{S}}
\newcommand{\hh}{\mathbb{H}}

\newcommand{\sect}{\operatorname{Sect}}
\newcommand{\ric}{\operatorname{Ric}}

\newcommand{\riem}{\operatorname{Riem}}

\newcommand{\supp}{\operatorname{supp}}

\newcommand{\KN}{\mathbin{\bigcirc\mspace{-15mu}\wedge\mspace{3mu}}}

\newcommand{\inj}{\mathrm{inj}}
\newcommand{\harm}{\mathrm{harm}}


\newcommand{\dist}{\mathrm{dist}}

\renewcommand{\a}{\alpha}

\renewcommand{\b}{\beta}

\renewcommand{\d}{\delta}
\newcommand{\ve}{\varepsilon}

\newcommand{\s}{\sigma}

\newcommand{\vp}{\varphi}


\newcommand{\CL}{\mathcal{L}}

\newcommand{\CO}{\mathcal{O}}
\newcommand{\CQ}{\mathcal{Q}}
\newcommand{\CJ}{\mathcal{J}}


\newcommand{\C}{\mathscr{C}}
\newcommand{\D}{\mathscr{D}}
\newcommand{\E}{\mathscr{E}}





\newcommand{\pr}{\partial_{r}}
\newcommand{\intprod}{\int_{N} \int_{r_{0}}^{+\infty}}

\begin{document}

\title[Unique continuation at infinity: general warped cylinders]{Unique continuation at infinity: \\ Carleman estimates on general warped cylinders}

\begin{abstract}
We obtain a vanishing result for solutions of the inequality $|\Delta u| \leq q_1 |u| + q_2 |\nabla u|$ that decay to zero along a very general warped cylindrical end of a Riemannian manifold. The  appropriate decay condition at infinity on $u$ is  related to the behavior of the potential functions $q_1$ and $q_2$ and to the asymptotic geometry of the end. The main ingredient is a new  Carleman estimate of independent interest. Geometric applications to conformal deformations and to minimal graphs  are presented.
\end{abstract}

\author{Nicol\`o De Ponti}
\address{Universit\'a di Milano-Bicocca, Dipartimento di Matematica e Applicazioni\\ via Roberto Cozzi 55, 20125 Milano, Italy}
\email{nicolo.deponti@unimib.it}
\author{Stefano Pigola}
\address{Universit\'a di Milano-Bicocca, Dipartimento di Matematica e Applicazioni\\ via Roberto Cozzi 55, 20125 Milano, Italy}
\email{stefano.pigola@unimib.it}
\author{Giona Veronelli}
\address{Universit\'a di Milano-Bicocca, Dipartimento di Matematica e Applicazioni\\ via Roberto Cozzi 55, 20125 Milano, Italy} \email{giona.veronelli@unimib.it}
\date{\today}
\maketitle
\tableofcontents

\newpage


\section{Introduction}
On a Riemannian manifold $(M,g)$, let us consider a solution $u$ of a PDE of the form
\[
\Delta u + g(X,\nabla u) + qu=0
\]
where $X$ is a  vector field (the drift) and  $q$ is a potential function. If we set $q_1 = |q|$  and $q_2 = |X|$ then, clearly, $u$ satisfies the differential inequality
\begin{equation}\label{eq:ineq}
|\Delta u| \leq q_{1} |u |+ q_{2} |\nabla u|.
\end{equation}
{\it Unique continuation at infinity} for a solution of \eqref{eq:ineq} usually means the property that $u(y)$ vanishes identically on $M$ provided it decays to zero sufficiently fast as $y \to \infty$. 
Such a decay depends deeply on the asymptotic behavior of the potential functions $q_{1},q_{2}$ and on the geometry at infinity of $M$. As soon as the Euclidean space is concerned, the use of a Kelvin transform shows that unique continuation at infinity could be considered as a different manifestation of the classical unique continuation at a fixed point. This viewpoint, however, is not applicable on a general manifold and genuinely different results must be developed, without appealing to any transforms mapping the geometry at infinity to the local geometry around a point.\smallskip

Thus, in unique continuation at infinity problems, one has three players competing to achieve the goal:
\begin{itemize}
 \item The ground space $(M,g)$ with its asymptotic geometry;
 \item The coefficients $q_{1}$ and $q_{2}$ with their asymptotic behavior;
 \item The decay rate at infinity of $u$.
\end{itemize}

To put our paper in perspective let us first assume we are in the classical Euclidean setting $M = \rr^{n}$. Even here, finding the maximal decay rate of nontrivial solutions is a challenging  problem. The sharp rate for  solutions of $|\Delta u|\leq C |u|$, with $C>0$ a constant, is predicted by a conjecture originally due to Landis in the $50$s and later revised by Kenig, \cite{Ke}. Accordingly, $u \equiv 0$ provided $|u(y)| = \CO( e^{- \tau |y|^{1+\ve}})$ for any fixed $\ve>0$ and every $\tau >0$. So far (Kenig version of the) Landis conjecture is settled only in dimensions $n=1,2$, \cite{Ro, LMNN}, whereas, in higher dimensions, the most general result in the literature is due to Meshkov, \cite{Me1},  and requires the strongest decay condition $|u(y)|= \CO ( e^{- \tau |y|^{4/3}})$. What is astonishing in Meshkov paper is that this decay is proved to be sharp when $u$ is a complex-valued function solving $\Delta u = q u$ and $q$ is a bounded complex-valued potential. This shows how subtle the problem is and that the techniques implemented to prove the Landis conjecture in the affirmative must be sensitive of the target field.

Obviously, a different asymptotic behavior of the coefficient $q$ changes radically the situation. For instance, suppose we are considering solutions of \eqref{eq:ineq}   with $q_1(y) = \CO(|y|^{-2})$ and $q_2(y)= \ve |y|^{-1}$ for a sufficiently small $\ve>0$. Then, a classical  and sharp result due independently to Meshkov, \cite{Me2}, and Pan-Wolff, \cite{PW}, states that any solution $u$ decaying faster than polynomially, i.e. $|u(y)| = \CO( |y|^{-\tau})$ for every $\tau >0$, must vanish.

Finally, let us  introduce a different background geometry by assuming that $M=\hh^{n}$ is the standard Hyperbolic space. The influence of the asymptotic geometry of the space is perfectly visible from a well known result by Mazzeo, \cite{Ma}, that compares directly with Landis and Meshkov. Accordingly, if $q_1$ and $q_2$ are bounded functions, the unique continuation at infinity holds for solutions  of \eqref{eq:ineq} decaying more than exponentially, i.e. $|u(y)| = \CO(e^{-\tau |y|})$ for every $\tau >0$. \smallskip

From the pure perspective of PDEs, the main purpose of the present paper is to give a unifying point of view on Meshkov's and Mazzeo's contributions by pointing out, in the general framework of {\it warped cylinders} (see Section \ref{section:notation}), how the above mentioned players (asymptotic geometry of the space, asymptotic behavior of the coefficients and asymptotic decay rate of the solution) interact. Our investigation will lead to a general unique continuation result, Theorem \ref{th-meshkov}, that is specified in a plethora of new conclusions, Section \ref{section:concreteexamples}, and potentially applies to many other situations. Due to its generality and to the technique of proof,  this unique continuation is not able to capture sharp Euclidean behaviors like those related to  Meshkov-Pan-Wolff. This kind of phenomena will be discussed in a forthcoming paper.\smallskip

In the same spirit of \cite{Me1, Ma}, and of a consistent part of the classical literature on the subject, our unique continuation result relies on a new {\it $L^2$ Carleman inequality} that, as it was recently asked in \cite[Question 1.8]{Su2}, incorporates the information on the geometry. Carleman inequalities are weighted elliptic estimates that we write in the general form
\[
\int ( k_{1}\vp^{2} + k_{2}|\nabla \vp|^{2}) e^{h} d\mu_M \leq C \int |\Delta \vp|^{2} e^{h} d\mu_M,
\]
whose validity is originally proved for smooth functions compactly supported in one end of the manifold and then extended globally to functions in the natural weighted Sobolev class; see Section \ref{section:Carleman}. It should be noted that this kind of estimates (in their global form), with a rough constant $C>0$, appear as a part of the weighted Fredholm theory for elliptic operators in asymptotically  Euclidean spaces, \cite{Ba}, and asymptotically Hyperbolic spaces, \cite{Le}. On the other hand, it is by now well understood that the use of Carleman inequalities in unique continuation problems requires a rather explicit dependence of the constant on the weights. In fact, one typically has a family of weights depending on a parameter and the idea is to use the dependence of the constant on this parameter in order to violate the integral inequality on non-vanishing solutions of the PDE.\smallskip

It is worth noting that another consistent part of the literature about unique continuation results is based on different techniques which involve a {\it frequency function}. It is possible that results analogous to ours can also be proved through these  frequency methods. However, these latter methods have been so far poorly developed  in general geometric settings.\smallskip

Unique continuation at infinity plays a central role in several contexts ranging from the absence of eigenvalues embedded in the spectrum of elliptic operators, \cite{Ma, Don}, up to, on a more geometric side, the asymptotic structure of minimal hypersurfaces in both Riemannian and weighted ambient spaces (so to give the citizenship to self-similar solutions of geometric flows), \cite{Wa1, Wa2, De, Su1,Su2}. In this latter context, in order to exemplify the use of our general result, we shall present an essentially sharp asymptotic uniqueness for vertical minimal graphs over Hyperbolic conical ends, Theorem \ref{Th:mimalgraphestimate}. On the same background space, but in a completely different direction, we also point out how one can obtain a-priori asymptotic estimates for the conformal factor in the prescribed scalar curvature equation, Theorem \ref{th:confdef}. These are just instances of the type of results one can obtain via a general unique continuation at infinity on warped cylinders.
\\

\section{Notation}\label{section:notation}

\subsection{Spaces} Throughout this paper, $(M,g)$ will always denote a complete, connected Riemannian manifold without boundary and of dimension $n \geq 2$. Its Riemannian measure is denoted with $\mu_M$. The symbols $\nabla u $, $\Delta u $, $\Hess(u)$ stand for the usual gradient vector field, the (negative definite) Laplacian and the Hessian $(0,2)$-tensor field applied to a function $u$. We also set $\div$ for the divergence operator so that $\Delta u = \div (\nabla u)$. Finally, $\riem$ is reserved for the Riemann $(0,4)$-tensor of $M$ and we use the symbols $\sect(X \wedge Y)$ and $\ric(X,X)$ to denote, respectively, the sectional curvature along the tangent two-plane spanned by $X$ and $Y$, and the Ricci curvature evaluated along the tangent vector field $X$.\smallskip

Given a compact set $K \subset M$, we let $\{\E_{j}\}_{j=1,\cdots,m}$ be the unbounded connected components of $M \setminus K$. They are called the {\it ends} of $M$ with respect to $K$. If $K = \bar \Omega$ for some domain $\Omega \Subset M$ with smooth boundary, then each $\E = \bar \E_{j}$ is a complete, non-compact Riemannian manifold with smooth and compact boundary $\partial \E \not=\emptyset$.\smallskip

We are interested in ends with a special metric structure.

\begin{definition}
 By a warped cylindrical end, or a  warped half-cylinder, we mean any $n$-dimensional warped product manifold of the form
 \[
 \E(r_0) = \left( (r_0,+\infty) \times N ,\, g=dr \otimes dr+ \s(r)^{2} g_{N}\right),\qquad r_0\ge 0,
 \]
 where
\begin{enumerate}[a)]
 \item $(N,g_{N})$ is a complete (often compact), $(n-1)$-dimensional Riemannian manifold without boundary;
 \item $\s : (r_0,+\infty)\to (0,+\infty)$ is a smooth function. 
\end{enumerate}
\end{definition}
When we need to emphasize the dependence on all of the relevant data we shall use the exhaustive notation $\E^{n}_{\s,N}(r_{0})$.

Thus, for instance, if $N = \ss^{n-1}$ and
\begin{equation}\label{def: warpfunc}
\s_{B}(r) =
\begin {cases} 
r & \text{ if } B=0 \\
\frac{1}{\sqrt {-B}}\sinh(\sqrt{-B}\,r) & \text{ if } B<0
\end{cases}
\end{equation}
then, according to the value of $B$, the corresponding warped cylindrical end $\E^{n}_{\s_{B},\ss^{n-1}}(r_{0})$ is nothing but the exterior of a compact geodesic ball of radius $r_{0}>0$ in the spaceforms $\rr^{n}$ or $\hh^{n}_{B}$.\smallskip

Actually, the warping functions $\sigma_0$ and $\sigma_{-1}$ defined in \eqref{def: warpfunc} will play a special role in the paper. It is therefore convenient to give the following.

\begin{definition}
Let $(N,g_{N})$ be a compact Riemannian manifold without boundary and of dimension $\dim N = n-1 \geq 1$.\smallskip

\begin{enumerate}[a)]
    \item By a Euclidean cone $\C^n_{Eu}(N)$ of dimension $n$ and with smooth section $N$, we mean the space $\E_{\s_0,N}^n(0)$. Namely
    \[
    \C^n_{Eu}(N) = ((0,+\infty)\times N , g = dr \otimes dr + r^{2} g_{N}).
    \]
    Correspondingly, any end $\E(r_0) \subset \C^n_{Eu}(N)$ is called a Euclidean conical end.
    \item Similarly, a Hyperbolic cone of dimension $n$ and with smooth section $N$ is the space $\C^n_{Hyp}(N)= \E_{\s_{-1},N}^n(0)$, namely
    \[
    \C^n_{Hyp}(N) = ((0,+\infty)\times N , g = dr \otimes dr + \sinh^{2}(r) g_{N}).
    \]
    Any end $\E(r_0) \subset \C^n_{Hyp}(N)$ is called a Hyperbolic conical end.
\end{enumerate}
\end{definition}
Notice that, in the previous definition, the section $N$ is a closed manifold possibly different from the standard sphere.

\subsection{Functions} Everywhere in the paper the big-O and little-o notation for functions $v$ defined on a warped cylinder $\E$ will always refer to limits as $r\to +\infty$. Accordingly
\[
v(r,x) =  \CO(f(r)) \Longleftrightarrow \limsup_{r\to +\infty}\left|\frac{v(r,x)}{f(r)}\right| < +\infty
\]
and
 \[
 v(r,x) =  o(f(r)) \Longleftrightarrow \lim_{r\to +\infty}\frac{v(r,x)}{f(r)} =0
 \]
uniformly in $x$.

\section{A general unique continuation result}

The following unique continuation result, together with the subsequent Corollary \ref{coro_meshkov}, is the main abstract theorem of this paper. Some concrete incarnations will be presented in Section \ref{section:concreteexamples}.

\begin{theorem}\label{th-meshkov}
Let $(M,g)$ be an $n$-dimensional complete Riemannian manifold with a warped cylindrical end  $\E(r_0) = \E^n_{\s,N}(r_{0})$, $r_0\ge 0$, where $(N,g_N)$ is compact without boundary and the warping function satisfies \begin{equation}\label{sigma lapl}
        |(\log\sigma)'(r)| \le \kappa r, \quad r\gg 1
    \end{equation} 
for some constant $\kappa>0$. Let $q_1,q_2 : M \to [0,+\infty)$ be continuous functions and $u\in C^{2}(M)$ be a solution of the differential inequality
\begin{equation}
|\Delta u| \leq q_1\, |u| + q_2 \, |\nabla u| ,\quad \text{on }M.
\end{equation}
Let $\{h_\tau\}_{\tau>0}$ be a given family of $C^{2}$ non-decreasing functions $(0,+\infty)\to \rr$, and
let $\{k_{1,\tau}\}_{\tau>0}$ and $\{k_{2,\tau}\}_{\tau>0}$ be two families of functions $(0,+\infty)\to [0,+\infty)$. Suppose that there exists a $C^2$ function $G :(0,+\infty) \to \rr$ such that the following conditions are satisfied for every $\tau \gg 1$:
 \begin{equation}\label{a:k2}
0\le k_{2,\tau} \le 2\min\left\{  \left[ 2F_\tau^{2} - 2(n-1) \frac{F_\tau\s'}{ \s} + F_\tau' + F_\tau G' \right]\ ;\ \left[-F_\tau'-F_\tau G'+2\frac{F_\tau\s'}{\s}\right]\right\}     
 \end{equation}
and
\begin{equation}
 0\le  k_{1,\tau} \le 2(A_\tau' + A_\tau G') - k_{2,\tau}F_\tau^2 - G'k_{2,\tau}F_\tau-k_{2,\tau}'F_\tau-k_{2,\tau}F_\tau'\,,    
\end{equation}
where
 \begin{equation}\label{eq1th}
		A_\tau(r) : = \left[F_\tau^{3} - \frac{F_\tau \, (\s^{n-1} \, F_\tau)'}{\s^{n-1}}\right]
	\end{equation}
and
\[
F_\tau:=\frac{1}{2}\left[h_\tau'+(n-1)\frac{\s'}{\s}-G'\right].
\]
Suppose that, for either $\ell=1$ or $\ell=2$,
\begin{equation}\label{e:k infty}
    k_{\ell,\tau} \to + \infty, \text{ as }\tau \to +\infty
\end{equation}
and that, for $\ell=1,2$,
\begin{equation}\label{ass: kq}
   \sup_{\E(r_0)} \frac{q_{\ell}^2}{k_{\ell,\tau}} \to 0 \text{ as }\tau \to + \infty.
\end{equation}
If, for every $\tau \gg 1$,
\begin{equation}\label{eq-decay}
\int_{\E(r_0)} u^{2} \, (1+k_{2,\tau}) \, e^{h_{\tau}} d\mu_M <+\infty
\end{equation}
and
\begin{equation}\label{eq-decay''}
    \int_{\E(r)\setminus \E(2r)}|\nabla u|^2 e^{h_\tau} d\mu_M = o(r^2),
\end{equation}
then $u \equiv 0$ on $M$.
\end{theorem}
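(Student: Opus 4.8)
The plan is to derive Theorem~\ref{th-meshkov} from an $L^2$ Carleman inequality on the end: one applies it to the solution $u$ cut off both near the compact core of $M$ and near infinity, lets the outer cut-off radius tend to infinity, and finally lets the Carleman parameter $\tau$ tend to infinity; this produces the vanishing of $u$ (or of $\nabla u$) on the end, whence $u\equiv0$ on $M$ by the classical (local) unique continuation property, $M$ being connected and $q_1,q_2$ locally bounded.

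\emph{The Carleman estimate, and why it is the main point.} First I would prove, in Section~\ref{section:Carleman}, the weighted inequality
\[
\int_{\E(r_1)}\bigl(k_{1,\tau}\,\vp^{2}+k_{2,\tau}\,|\nabla\vp|^{2}\bigr)\,e^{h_{\tau}}\,d\mu_M\ \le\ C\int_{\E(r_1)}|\Delta\vp|^{2}\,e^{h_{\tau}}\,d\mu_M,\qquad \vp\in C^{\infty}_{c}(\E(r_1)),
\]
valid, for $r_1$ large and $\tau\gg1$, with a constant $C>0$ independent of $\tau$ (all of the $\tau$-dependence being carried by the weights $h_\tau$ and $k_{\ell,\tau}$), precisely under the structural hypotheses \eqref{sigma lapl}, \eqref{a:k2}, \eqref{eq1th} and the stated bound on $k_{1,\tau}$. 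The route is the classical one for warped products: write $\Delta=\pr^{2}+(n-1)(\log\s)'\pr+\s^{-2}\Delta_{N}$, conjugate by $e^{h_\tau/2}$ and by the gauge factor $e^{G/2}$, decompose the conjugated operator into its formally self-adjoint and skew-adjoint parts, expand $\|\Delta\vp\,e^{h_\tau/2}\|_{L^2}^2$ using this splitting, and integrate by parts in $r$ in the resulting ``commutator'' term (of Rellich--Pohozaev type), working mode by mode in the spectral decomposition of $\Delta_N$ on the compact $N$. The many resulting bulk terms are organised exactly by the quantities $F_\tau$ and $A_\tau$ of \eqref{eq1th}, and their signs are dictated by \eqref{a:k2} and by the upper bound on $k_{1,\tau}$; the growth condition \eqref{sigma lapl} is what prevents the terms involving $(\log\s)'$ (hence the geometry of the end) from spoiling the estimate. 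This bookkeeping-and-sign-chasing step is, I expect, the main technical obstacle: it is the ``new Carleman estimate'' the introduction advertises.

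\emph{Localisation, use of the equation, and the limit $R\to\infty$.} Fix $r_1\ge r_0$ large enough that the above estimate holds on $\E(r_1)$. Take a fixed inner cut-off $\psi\in C^\infty(\rr)$ that is non-decreasing, $\equiv0$ on $\{r\le r_1\}$ and $\equiv1$ on $\{r\ge r_1+1\}$, and, for $R>r_1+1$, outer cut-offs $\chi_R(r)=\eta(r/R)$ with $\eta\in C^\infty_c(\rr)$ fixed, $\equiv1$ on $\{r\le1\}$, $\equiv0$ on $\{r\ge2\}$, $0\le\eta\le1$. Applying the Carleman estimate to $\vp_R:=\psi\,\chi_R\,u$ (which, being $C^2$ with support in the compact set $\{r_1\le r\le2R\}\times N$, is admissible after a routine mollification), expanding $\Delta\vp_R$ by the product rule, and using $|\Delta u|^{2}\le2q_{1}^{2}u^{2}+2q_{2}^{2}|\nabla u|^{2}$ together with the standard rearrangement $\psi^2\chi_R^2|\nabla u|^2\le2|\nabla\vp_R|^2+2u^2|\nabla(\psi\chi_R)|^2$, the right-hand side of the estimate is dominated by $C\max\!\bigl(\sup_{\E(r_0)}q_1^2/k_{1,\tau},\ \sup_{\E(r_0)}q_2^2/k_{2,\tau}\bigr)$ times the Carleman left-hand side — which by \eqref{ass: kq} is $<\tfrac12$ times it once $\tau\gg1$, hence absorbed — plus: (ii) a finite, $R$-independent term supported in the compact annulus $\{r_1\le r\le r_1+1\}\times N$, where $u$, $\nabla u$, $h_\tau$ and $q_2$ are bounded; and (iii) terms supported in $\{R\le r\le2R\}\times N=\E(R)\setminus\E(2R)$ which tend to $0$ as $R\to+\infty$ — those involving $|\nabla\chi_R|^2\le CR^{-2}$ by \eqref{eq-decay''}, and those involving $\Delta\chi_R$, which is only $O(1)$ there (its leading part being $(n-1)(\log\s)'\chi_R'$, bounded via \eqref{sigma lapl}), by the convergence of $\int_{\E(r_1)}u^2 e^{h_\tau}d\mu_M$ and $\int_{\E(r_1)}k_{2,\tau}u^2 e^{h_\tau}d\mu_M$ granted by \eqref{eq-decay}. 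Letting $R\to+\infty$ (monotone convergence for the $k_{1,\tau}$-term, Fatou for the $k_{2,\tau}$-term) and absorbing the main term gives, for every $\tau\gg1$,
\[
\int_{\E(r_1+1)}\!\bigl(k_{1,\tau}u^{2}+k_{2,\tau}|\nabla u|^{2}\bigr)\,e^{h_{\tau}}\,d\mu_M\ \le\ C\!\!\int_{\{r_1\le r\le r_1+1\}\times N}\!\!(u^{2}+|\nabla u|^{2})\,e^{h_{\tau}}\,d\mu_M .
\]
Since $h_\tau$ is non-decreasing, $e^{h_\tau}\ge e^{h_\tau(r_1+1)}$ on $\E(r_1+1)$ while $e^{h_\tau}\le e^{h_\tau(r_1+1)}$ on the inner annulus; cancelling this common factor leaves
\[
\int_{\E(r_1+1)}\bigl(k_{1,\tau}u^{2}+k_{2,\tau}|\nabla u|^{2}\bigr)\,d\mu_M\ \le\ C\,E ,\qquad E:=\!\!\int_{\{r_1\le r\le r_1+1\}\times N}\!\!(u^{2}+|\nabla u|^{2})\,d\mu_M<+\infty ,
\]
with $C$ and $E$ both independent of $\tau$.

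\emph{Conclusion.} Let $\ell\in\{1,2\}$ be the index of \eqref{e:k infty}, and put $w:=u$ if $\ell=1$, $w:=|\nabla u|$ if $\ell=2$. Dividing the last inequality by $\inf_{\E(r_1+1)}k_{\ell,\tau}$ and letting $\tau\to+\infty$ via \eqref{e:k infty} forces $\int_{\E(r_1+1)}w^2\,d\mu_M=0$, i.e.\ $w\equiv0$ on the end $\E(r_1+1)$. If $\ell=1$, then $u$ vanishes on the non-empty open set $\E(r_1+1)$, and the classical unique continuation property for solutions of \eqref{eq:ineq} (legitimate since $q_1,q_2$ are locally bounded and $M$ is connected) yields $u\equiv0$ on $M$. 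If $\ell=2$, then $u$ is constant on the connected open set $\E(r_1+1)$; \eqref{eq-decay} forces this constant to vanish, and one then concludes as before. The only genuinely delicate ingredient is the Carleman estimate of the second step.
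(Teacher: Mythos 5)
Your overall route coincides with the paper's: the same conjugated-operator Carleman inequality for compactly supported functions on the warped end (conjugation by $e^{\int F_\tau}$ and the gauge $G$, splitting into symmetric/antisymmetric parts, integration by parts organised by $F_\tau$ and $A_\tau$), the same inner/outer radial cut-off scheme with $\Delta\chi_R=O(1)$ thanks to \eqref{sigma lapl} (the paper packages the limit $R\to+\infty$ as a separate density proposition yielding a constant $\Lambda(n,\kappa)$), the same absorption of the potential terms via \eqref{ass: kq}, the same use of the monotonicity of $h_\tau$ to render the inner-annulus term $\tau$-independent, and the same appeal to Kazdan's local unique continuation at the end. (One small misattribution: \eqref{sigma lapl} plays no role in the compactly supported Carleman estimate itself — the structural conditions \eqref{a:k2} and the bound on $k_{1,\tau}$ suffice there; it is used only to produce cut-offs with bounded Laplacian, exactly as you use it in step (iii).)

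There is, however, a genuine defect in your concluding step. From the uniform-in-$\tau$ bound $\int_{\E(r_1+1)}(k_{1,\tau}u^{2}+k_{2,\tau}|\nabla u|^{2})\,d\mu_M\le CE$ you conclude by ``dividing by $\inf_{\E(r_1+1)}k_{\ell,\tau}$ and letting $\tau\to+\infty$''. This requires $\inf_{\E(r_1+1)}k_{\ell,\tau}\to+\infty$, i.e.\ \emph{uniform} divergence over the noncompact end, whereas \eqref{e:k infty} is naturally (and, in view of the applications, necessarily) only pointwise: in the paper's own Euclidean examples one has $k_{1,\tau}\simeq\tau^{3}r^{3\beta-4}$ with $\beta<4/3$, or $k_{1,\tau}\simeq\tau^{3}(\log r)^{3\gamma-4}r^{-4}$, so $\inf_{\E(r_1+1)}k_{1,\tau}=0$ for every $\tau$ and your division yields nothing. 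The repair is immediate and is what the paper does: apply Fatou's lemma to your final inequality — the integrand $k_{\ell,\tau}w^{2}$ tends pointwise to $+\infty$ wherever $w\neq0$, while the integrals stay bounded by $CE$, forcing $w\equiv0$ on $\E(r_1+1)$. A secondary caveat: in the case $\ell=2$ your claim that \eqref{eq-decay} forces the constant value of $u$ to vanish presupposes that $\int_{\E}(1+k_{2,\tau})e^{h_\tau}d\mu_M=+\infty$, which is not automatic (warped ends of finite volume are compatible with \eqref{sigma lapl}); the paper is equally terse on this point, but it deserves a word if you invoke it.
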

\begin{remark}\label{rem: curvcond}
    As it will be visible from the proof,  condition \eqref{sigma lapl} is only used to ensure the existence of a family of Laplacian cut-off functions; see \eqref{eq-cutoff}. According to \cite{BS}, the existence of such a family is guaranteed 
    when the curvature condition 
        \begin{equation}\label{eq9th}
 \ric(r,x) \geq - C  \left(1 + r^{2}\right)
\end{equation}
is satisfied for every $(r,x) \in \E(r_0)$ and for some constant $C>0$.
While the result from \cite{BS} holds in the general setting of complete manifolds without requiring any symmetry, in the special case of warped cylindrical ends the assumption on Ricci can be weakened and replaced by \eqref{sigma lapl}.
\end{remark}
Under stronger (yet very general) assumptions on $h_{\tau}$ and $q_1,q_2$,
the unpleasant growth condition 
    \eqref{eq-decay''} on the gradient of $u$ can be removed. 
\begin{corollary}\label{coro_meshkov}
Suppose that for every $\tau>0$,
\begin{equation}\label{mainass: cor}
\sup_{(s,x) \in \E(r)\setminus \E(8r)} ( r|h_\tau'(s)| + q_1(s,x) + r\,q_2(s,x)) =\CO(r^2).
\end{equation}
Then  Theorem \ref{th-meshkov} holds true without requiring 
 assumption \eqref{eq-decay''}.
\end{corollary}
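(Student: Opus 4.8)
The plan is to reduce Corollary \ref{coro_meshkov} to Theorem \ref{th-meshkov} by showing that the strengthened coefficient bound \eqref{mainass: cor}, combined with the weighted finiteness \eqref{eq-decay} and the differential inequality itself, automatically yields the gradient--decay condition \eqref{eq-decay''}. Once \eqref{eq-decay''} is in hand, Theorem \ref{th-meshkov} applies verbatim and gives $u\equiv 0$ on $M$, so nothing further is needed. The single tool for the reduction is a weighted Caccioppoli (reverse Poincar\'e) inequality on the dyadic annuli of the end $\E(r_0)=\E^{n}_{\s,N}(r_0)$: it converts a bound on the weighted mass $\int u^{2}e^{h_{\tau}}$ into a bound on the weighted Dirichlet energy $\int|\nabla u|^{2}e^{h_{\tau}}$ over a slightly smaller annulus.

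In detail, I would fix $\tau\gg1$ and $r\gg1$ (so that $r/2>r_0$), pick a radial cut-off $\chi=\chi(s)$ with $\chi\equiv1$ on $[r,2r]$, $\supp\chi\subset(r/2,4r)$ and $|\chi'|\le C/r$, and test the pointwise inequality $|\Delta u|\le q_1|u|+q_2|\nabla u|$ against $u\,\chi^{2}e^{h_{\tau}}$. Since $\chi$ is compactly supported in the interior of the end, integration by parts produces no boundary term and gives
\[
\int_{\E(r_0)}|\nabla u|^{2}\chi^{2}e^{h_{\tau}}\,d\mu_M=-\int_{\E(r_0)}(\Delta u)\,u\,\chi^{2}e^{h_{\tau}}\,d\mu_M-\int_{\E(r_0)}u\,g\!\left(\nabla u,\nabla(\chi^{2}e^{h_{\tau}})\right)d\mu_M.
\]
As $h_{\tau}$ depends on $r$ alone and $|\nabla r|=1$, one has $|\nabla(\chi^{2}e^{h_{\tau}})|\le(2\chi|\chi'|+\chi^{2}|h_{\tau}'|)e^{h_{\tau}}$; plugging this in, using the differential inequality, and absorbing $\tfrac12\int|\nabla u|^{2}\chi^{2}e^{h_{\tau}}$ on the left via Young's inequality leaves
\[
\int_{\E(r_0)}|\nabla u|^{2}\chi^{2}e^{h_{\tau}}\,d\mu_M\le C\int_{\E(r_0)}\bigl(|\chi'|^{2}+\chi^{2}(h_{\tau}')^{2}+\chi^{2}q_1+\chi^{2}q_2^{2}\bigr)u^{2}e^{h_{\tau}}\,d\mu_M.
\]

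The final step is to estimate the bracket on $\supp\chi$. Setting $\rho=r/2$ we have $\supp\chi\subset\E(\rho)\setminus\E(8\rho)$, so \eqref{mainass: cor} with $r$ replaced by $\rho$ gives, for $r\gg1$, the bounds $|h_{\tau}'|\le Cr$, $q_1\le Cr^{2}$, $q_2\le Cr$ there, hence $(h_{\tau}')^{2}+q_1+q_2^{2}\le Cr^{2}$; together with $|\chi'|^{2}\le Cr^{-2}\le Cr^{2}$ this turns the previous display into
\[
\int_{\E(r)\setminus\E(2r)}|\nabla u|^{2}e^{h_{\tau}}\,d\mu_M\le C\,r^{2}\int_{\E(r/2)\setminus\E(4r)}u^{2}e^{h_{\tau}}\,d\mu_M.
\]
Now \eqref{eq-decay} forces $\int_{\E(r_0)}u^{2}e^{h_{\tau}}\,d\mu_M<+\infty$, so the tail integral on the right tends to $0$ as $r\to+\infty$; hence the left-hand side is $o(r^{2})$, which is exactly \eqref{eq-decay''}, and Theorem \ref{th-meshkov} concludes the argument.

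I do not expect a serious obstacle here --- the estimate is elementary --- but two bookkeeping points deserve attention. First, the way the powers of $r$ in \eqref{mainass: cor} are split: since $h_{\tau}'$ and $q_2$ enter the Caccioppoli bound \emph{squared} while $q_1$ enters linearly, one needs $|h_{\tau}'|=\CO(r)$, $q_2=\CO(r)$, $q_1=\CO(r^{2})$, which is precisely the content of $r|h_{\tau}'|+q_1+rq_2=\CO(r^{2})$; this also matches $|\chi'|^{2}=\CO(r^{2})$. Second, the dyadic scales must be aligned: the cut-off lives on $(r/2,4r)$, an annulus of ratio $8$, which is exactly the range over which \eqref{mainass: cor}, applied with parameter $r/2$, controls the coefficients --- this is why the factor $8$ in \eqref{mainass: cor} is what is needed and is harmless.
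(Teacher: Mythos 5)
Your proposal is correct and follows essentially the same route as the paper: both derive the missing gradient condition \eqref{eq-decay''} from a weighted Caccioppoli estimate with a cut-off supported in an annulus of ratio $8$, using the coefficient bounds \eqref{mainass: cor} on that annulus and the fact that \eqref{eq-decay} makes the tail of $\int u^2 e^{h_\tau}\,d\mu_M$ vanish as $r\to+\infty$. The only cosmetic difference is that you absorb the $q_2|\nabla u|$ contribution by a direct pointwise Young inequality, whereas the paper keeps $(\Delta u)^2\le 2q_1^2u^2+2q_2^2|\nabla u|^2$ and absorbs it through the parameter choice $\varepsilon_R=\max\{R,3\,Q_{2,R}\}$; the outcome is the same.
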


\section{Carleman estimates}\label{section:Carleman}

A crucial ingredient in the proof of Theorem \ref{th-meshkov} is represented by a suitable global Carleman estimate which is sensitive of the geometry of the space. We shall achieve the desired estimate in two steps. First, we consider compactly supported functions and next we extend the estimate to general functions by using an approximation argument that relies on the existence of Laplacian cut-off functions.

\subsection{Compactly supported functions}

The first important step is represented by the general Carleman-type estimate contained in the next Lemma.
\begin{lemma}\label{lemma1grad}
	Let $\E(r_0)=\E^{n}_{\s,N}(r_0)$, $r_0\ge 0$, be a warped cylindrical end. 
	Let $h,G :(0,+\infty) \to \rr$ be two given $C^{2}$ functions. Then, for every $v \in C^{\infty}_{c}(\E(r_0))$ the following Carleman-type estimate holds
	\begin{equation}\label{eq5}
		\int_{\E(r_0)}  v^{2} \, k_1 \,e^{h} d\mu_M + \int_{\E(r_0)}  |\nabla v|^{2} \, k_2 \,e^{h} d\mu_M \leq \int_{\E(r_0)}(\Delta v)^{2} \, e^{h} d\mu_M,
	\end{equation}
	where $k_1$ and $k_2$ are such that
 \[
k_2 \le 2\min\left\{  \left[ 2F^{2} - 2(n-1) \frac{F\s'}{ \s} + F' + FG' \right]\ ;\ \left[-F'-FG'+2\frac{F\s'}{\s}\right]\right\}
\]
and
\[
  k_1 \le 2(A' + AG') - k_2F^2 - G'k_2F-k_2'F-k_2F'\,, 
\]
	with \begin{equation}\label{eq1}
		A(r) : = \left[F^{3} - \frac{F \, (\s^{n-1} \, F)'}{\s^{n-1}}\right]
	\end{equation}
and
\[
F:=\frac{1}{2}\left[h'+(n-1)\frac{\s'}{\s}-G'\right].
\]
\end{lemma}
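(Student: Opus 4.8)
The plan is to follow the classical conjugation-and-commutator scheme underlying $L^2$ Carleman estimates, adapted to the warped product structure. First I would set $w = e^{h/2} v$ so that $v \in C^\infty_c(\E(r_0))$ corresponds to $w \in C^\infty_c(\E(r_0))$, and express $e^{h/2}\Delta v$ in terms of $w$. A direct computation gives $e^{h/2}\Delta v = \Delta w - h' \partial_r w + \bigl(\tfrac14 (h')^2 - \tfrac12 h'' - \tfrac12 h' \cdot \tfrac{(\s^{n-1})'}{\s^{n-1}}\bigr) w$, using that $\Delta = \partial_r^2 + (n-1)\tfrac{\s'}{\s}\partial_r + \tfrac{1}{\s^2}\Delta_N$ on a warped cylinder and that $h$ depends only on $r$. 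The function $G$ enters as an additional gauge: I would absorb a first-order term by writing the conjugated operator as a sum $S + B$ of a formally self-adjoint part $S$ and a formally skew-adjoint part $B$ with respect to the measure $e^{G}\,d\mu_M$ (or equivalently carry the weight $e^{G}$ through the integration by parts), which is precisely the role of the combination $F = \tfrac12(h' + (n-1)\tfrac{\s'}{\s} - G')$ appearing in the statement. Then $\int (\Delta v)^2 e^h\,d\mu_M = \int |Sw + Bw|^2 e^{-?}\dots = \int (Sw)^2 + \int (Bw)^2 + 2\int (Sw)(Bw)$ against the appropriate measure, and the cross term $2\int (Sw)(Bw) = \int ([S,B]w)\,w$ produces, after integration by parts, the positive quadratic form in $w$ and $\nabla w$ whose coefficients are exactly the bracketed expressions in the hypotheses. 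The quantity $A(r) = F^3 - F(\s^{n-1}F)'/\s^{n-1}$ is what emerges as the zeroth-order commutator coefficient, while the $\min\{\cdot;\cdot\}$ in the bound on $k_2$ reflects the need to control the gradient term both from the radial part $(\partial_r w)^2$ and from the spherical part $\s^{-2}|\nabla_N w|^2$ of $|\nabla w|^2$, each of which contributes with its own coefficient; taking twice the minimum guarantees $k_2|\nabla w|^2$ is dominated.

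Concretely, the key steps in order would be: (1) reduce to $w = e^{h/2}v$ and compute the conjugated Laplacian, isolating the gauge function $G$ so that the effective first-order coefficient becomes $2F$; (2) split the conjugated operator into symmetric part $S$ and antisymmetric part $B$ relative to the measure $e^G\,d\mu_M$, so that $\|\Delta v\|^2_{L^2(e^h)} = \|Sw\|^2 + \|Bw\|^2 + \langle [S,B]w, w\rangle$; (3) compute the commutator $[S,B]$, which is a second-order operator whose symbol, after one integration by parts (legitimate since $w$ is compactly supported, so no boundary terms appear — this uses $r_0 \ge 0$ and compact support away from $\partial\E$), yields $\langle [S,B]w,w\rangle = \int (2(A'+AG'))w^2 + (\text{gradient-order terms})$; (4) absorb the $k_2|\nabla v|^2$ term: add and subtract $\int k_2|\nabla w|^2 e^G$, use $\|Bw\|^2 \ge 0$ and $\|Sw\|^2 \ge 0$ to discard them, and check that the residual coefficient of $w^2$ after moving $k_2$-dependent corrections to the left is exactly the stated bound on $k_1$ (the terms $-k_2 F^2 - G' k_2 F - k_2' F - k_2 F'$ are precisely the price of trading gradient control for the zeroth-order estimate, arising when one integrates by parts $\int k_2 |\nabla w|^2$ against the first-order structure); (5) undo the substitution $w = e^{h/2}v$, noting $|\nabla w|^2 e^G = e^h(|\nabla v|^2 + \text{l.o.t.})$ and that the lower-order corrections have already been accounted for in the definition of $F$ and in the $k_1$ bound, to recover \eqref{eq5}.

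The main obstacle I expect is bookkeeping in step (3)–(4): the commutator of a second-order symmetric operator with a first-order skew operator on a warped product generates many terms — radial derivatives of $\s$ up to third order, products with $h''$, $h'''$, $G''$ — and one must verify that, after all integrations by parts, they organize exactly into the combination $2(A' + AG') - k_2 F^2 - G' k_2 F - k_2' F - k_2 F'$ with $A$ as defined, and that the coefficient multiplying $|\nabla v|^2$ collapses to the two competing expressions $2F^2 - 2(n-1)F\s'/\s + F' + FG'$ and $-F' - FG' + 2F\s'/\s$ — the first coming from the $(\partial_r v)^2$ channel, the second from the tangential $\s^{-2}|\nabla_N v|^2$ channel. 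Getting the signs right (so that the discarded squares $\|Sw\|^2, \|Bw\|^2$ sit on the correct side) and confirming that no boundary integrals survive on $(r_0, +\infty) \times N$ for compactly supported $v$ are the delicate points; everything else is a deterministic, if lengthy, computation with the warped-product Laplacian.
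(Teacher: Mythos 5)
Your overall scheme (conjugation, splitting the conjugated operator, extracting positivity from the cross term by integration by parts) is in the same family as the paper's argument, and several of your predictions are accurate: the cubic-in-$F$ zeroth-order term does come from commuting the potential part against the first-order part, the two competing brackets do correspond to the radial and tangential channels of $|\nabla w|^2$, and the corrections $-k_2F^2-G'k_2F-k_2'F-k_2F'$ in the $k_1$ bound are indeed the price of converting $|\nabla w|^2$ back to $|\nabla v|^2$. (A cosmetic difference: the paper conjugates by $e^{f}$ with $f'=F$, which bundles $e^{h/2}$, the volume factor $\s^{(n-1)/2}$ and $e^{-G/2}$ into one weight, so that the effective measure becomes $e^{G}\,dr\,d\mu_N$; your $w=e^{h/2}v$ plus a separate $e^G$-gauge can be made to work, but your own ``$e^{-?}$'' placeholder signals that the measure with respect to which $S$ is symmetric and $B$ skew has not been fixed consistently with the norm $\int(\Delta v)^2e^h\,d\mu_M$ you are expanding --- this must be pinned down, since the identity $\|Pw\|^2=\|Sw\|^2+\|Bw\|^2+\langle[S,B]w,w\rangle$ only holds when all three pieces use the same inner product.)

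The genuine gap is in your step (4): you propose to discard \emph{both} $\|Sw\|^2$ and $\|Bw\|^2$ and to generate all the gradient positivity from the commutator alone. The commutator of the second-order symmetric part with the first-order skew part only produces gradient coefficients of the type $F'$, $FG'$ and $F\s'/\s$ (this is visible from $[\Delta,a\pr]$ with $a=a(r)$, whose second-order part is $2a'\pr^2+ a\,\tfrac{2\s'}{\s^{3}}\Delta_N$ up to signs); it can never produce the term $2F^2$ appearing in the first bracket of the admissible range for $k_2$. That term comes precisely from \emph{keeping} the square of the first-order piece: the paper writes $\bar\Delta w=\CQ w-\CL w$ with $\CL w=2F\pr w$ and uses the pointwise inequality $(\bar\Delta w)^2\ge(\CL w)^2-2\,\CL w\,\CQ w$, so that $(\CL w)^2=4F^2(\pr w)^2$ survives and supplies the $2F^2$ in $k_{2}^{L}$, while only $(\CQ w)^2$ is thrown away. (Note also that the paper's $\CQ$, $\CL$ are not exactly the symmetric/skew parts for the weighted measure; no exact symmetry is needed, since the cross term is simply integrated by parts.) As written, your argument would therefore prove a strictly weaker estimate, in which the first entry of the $\min$ lacks the $2F^2$ term; the fix is to retain $\|Bw\|^2$ (equivalently the square of the first-order part) and lower-bound it, rather than discarding it.
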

\begin{remark}\label{rem: mazzeo}
    Let $\mathrm W(\E(r_0))$ be the closure of $C^\infty_c(\E(r_0))$ with respect to the weighted norm
    \[
\|u\|_{\mathrm W}:=\|u\|_{L^2(\E(r_0),k_1\,e^h\,d\mu_M)}+\||\nabla u|\|_{L^2(\E(r_0),k_2\,e^h\,d\mu_M)}+\|\Delta u\|_{L^2(\E(r_0),e^h\,d\mu_M)}.
    \]
    Then, if $k_1$ and $k_2$ are non-negative the validity of \eqref{eq5} trivially extends to all $v\in\mathrm W(\E(r_0))$.
\end{remark}
\begin{proof}
 Fix $v \in C^{\infty}_{c}(\E(r_0))$. Define $f(r)=\int_{r_0}^r F(t)\,dt$ and $g=e^G$. Let
 \[
 w := e^{f}v\in C^{\infty}_{c}(\E(r_0)).
 \]
 Then,
 \[
 e^{f}\Delta v = \bar \Delta w,
 \]
 where we have set
 \[
 \bar \Delta (\bullet) := e^{f}\Delta (e^{-f }\bullet).
 \]
 With this notation, noticing that 
 
 $$g(r)=g(r_0)\frac{e^{h(r)-2f(r)}\sigma^{n-1}(r)}{e^{h(r_0)}\sigma^{n-1}(r_0)},$$ the desired inequality \eqref{eq5} takes the form
\begin{align}\label{eq5a0}
&\intprod  (\bar \Delta w)^{2}g \, dr d\mu_{N}\\
&\geq \intprod w^{2} g  \, k_1 \,dr d\mu_{N} + \intprod g\,k_2 \left\{ |\nabla w|^2+w^2(f')^2-2f'w\partial_rw\right\}\, dr d\mu_{N}\nonumber\\
&=\intprod w^{2}\left( g  \, k_1+g\,k_2F^2+(g\,k_2\,F)'\right) \,dr d\mu_{N} + \intprod g\,k_2 \left((\partial_rw)^2+\sigma^{-2}|\nabla^Nw|^2\right)\, dr d\mu_{N},\nonumber
\end{align}
where in the last equality we have used the expression of the gradient in polar coordinates and the integration by parts formula
\begin{equation}\label{eq:intbyparts}
\int_{r_{0}}^{r_{1}} \a \b \b' dt = -\frac{1}{2} \int_{r_{0}}^{r_{1}} \a' \b^2 dt,    
\end{equation}
which is obviously valid provided $\b^{(i)}(r_{0}) = \b^{(i)}(r_{1}) = 0$, $i=0,1$.

A direct computation that uses the standard formula
\[
\Delta w = \pr^{2}w + (n-1)\frac{\s'}{\s} \pr w + \frac{1}{\s^{2}} \Delta_{N} w
\]
gives
\begin{align*}
 \bar \Delta w &= \pr ^{2} w + (n-1) \frac{\s'}{\s} \pr w + \frac{1}{\s^{2}}\Delta_{N}w+ \left(F^{2} - \frac{(F\, \s^{n-1})'}{\s^{n-1}} \right) \, w  -2 F \, \pr w.
\end{align*}
We split the RHS of this latter as
\[
\bar \Delta w = \CQ w - \CL w
\]
where
\[
\CQ w = \pr ^{2} w + (n-1) \frac{\s'}{\s} \, \pr w + \frac{1}{\s^{2}} \, \Delta_{N}w + \left(F^{2} - \frac{(F \, \s^{n-1})'}{\s^{n-1}} \right) \, w
\]
and
\[
\CL w = 2F \, \pr w.
\]
Then,
\begin{align*}
 (\bar \Delta w)^{2} &\geq (\CL w)^{2} - 2\CL w \, \CQ w.
\end{align*}
By expanding the above expressions we obtain
\begin{equation}\label{eq6grad}
 \intprod (\bar \Delta w)^{2}  g \, dr d\mu_{N} \geq \CJ_{1} + \CJ_{2} + \CJ_{3},
\end{equation}
where 
\[
 \CJ_{1} :=  \intprod g\left\{ 4 F^{2}  (\pr w)^{2} - 4F  \pr w  \pr^{2}w - 		4(n-1) F  \frac{\s'}{\s}(\pr w)^{2} \right\} dr d\mu_{N},
 \]
 \[
 \CJ_{2} :=  \intprod  g\left\{ -4 \frac{F}{\s^{2}}  \pr w  \Delta_{N} w\right\} drd\mu_{N},
 \]
 and, recalling \eqref{eq1},
 \begin{align*}
 \CJ_{3} &:=  \intprod g\left\{ -4 F  w  \pr w   \left( F^{2} - \frac{(F  \s^{n-1})'}{\s^{n-1}}\right)\right\} drd\mu_{N}\\
 &= \int_N \int_{r_0}^{+\infty} -4 g \, w \, \partial_r w \, A \, dr d\mu_N.
 \end{align*}

In the sequel we will repeatedly use the integration by parts formula \eqref{eq:intbyparts}.
 Let us first consider the term $\CJ_{1}$. Let us compute:
\[
\intprod Fg \, (\pr w \, \pr^{2}w) dr d\mu_{N}  = -\frac{1}{2} \intprod (Fg)'(\pr w)^{2} dr d\mu_{N}.
\]
Therefore
\begin{align*}
 \CJ_{1} &= 2 \intprod (\pr w)^{2} \left\{ \left(2F^{2} - 2(n-1) F \frac{\s'}{\s}\right) g +(F g)' \right\} dr d\mu_{N}\\
 &= \intprod (\pr w)^{2} F^{2}g \left \{ 4 - 4(n-1) \frac{\s'}{F \s} +2 \frac{F'}{F^{2}} + \frac{2}{F} \frac{g'}{g} \right\} dr d\mu_{N}.
\end{align*}
Now we consider $\CJ_{2}$. Integrating by parts in $\rr$ we get
\begin{align*}
 \CJ_{2} = \CJ_{2}^{a} + \CJ_{2}^{b}
\end{align*}
where 
\begin{align*}
 \CJ_{2}^{a}  &:= 4 \intprod w \, \frac{Fg}{\s^{2}} \, \pr (\Delta_{N}w) \, drd\mu_{N}\\
 \CJ_{2}^{b} &:= 4 \int_{r_{0}}^{+\infty} \left( \frac{Fg}{\s^{2}}\right)' \int_{N} w \, (\Delta_{N}w) \,d\mu_{N} dr.
\end{align*}
We are going to elaborate each of these terms separately. Integrating by parts in $N$ we compute
\begin{equation}\label{eq-j2bgrad}
\CJ_{2}^{b} = -4 \int_{r_{0}}^{+\infty} \left( \frac{Fg}{\s^{2}} \right)' \int_{N} |\nabla^{N}w|^{2}d\mu_{N}dr.
\end{equation}
 On the other hand, using the fact that
\begin{equation}\label{eq7grad}
[\Delta_{N} , \pr] = 0,
\end{equation}
and integrating again by parts in $N$, we obtain
\begin{align*}
 \CJ_{2}^{a} &= 4 \int_{r_{0}}^{+\infty} \frac{Fg}{\s^{2}} \int_{N} w \Delta_{N}(\pr w) \, d\mu_{N}dr\\
 &= 4 \int_{r_{0}}^{+\infty} \frac{Fg}{\s^{2}} \int_{N} \Delta_{N} w \, \pr w \, d\mu_{N}dr\\
 &= - \CJ_{2}.
 \end{align*}
 Therefore
 \[
 \CJ_{2} = \frac{1}{2} \CJ_{2}^{b} = -2 \int_{r_{0}}^{+\infty} \left( \frac{Fg}{\s^{2}} \right)' \int_{N} |\nabla^{N}w|^{2}d\mu_{N}dr.
 \]
 Finally, we consider $\CJ_{3}$. Integrating by parts once more in $\rr$ we have
 \begin{align*}
 \CJ_{3} &= 2 \intprod \left\{ g \, \left( F^{3} - F \frac{(F \s^{n-1})'}{\s^{n-1}}\right) \right\}' w^{2} dr d\mu_{N}\\
 &= 2 \intprod (g A)'  w^{2} \, dr d\mu_{N}.
 \end{align*}
All together we have obtained that
\begin{align*}
 &\intprod (\bar \Delta w)^{2}  g \, dr d\mu_{N} \geq \CJ_{1} + \CJ_{2} + \CJ_{3}\\
 &= \intprod (\pr w)^{2} F^{2}g \left \{ 4 - 4(n-1) \frac{\s'}{F \s} +2 \frac{F'}{F^{2}} + \frac{2}{F} \frac{g'}{g} \right\} dr d\mu_{N}\\
& -2  \int_{r_{0}}^{+\infty} \left( \frac{Fg}{\s^{2}} \right)' \int_{N} |\nabla^{N}w|^{2}d\mu_{N}dr +2 \intprod (g A)'  w^{2} \, dr d\mu_{N}.
\end{align*}
By the choice of $k_1$ and $k_2$, this latter implies 
\eqref{eq5a0} and concludes the proof.
\end{proof}

\subsection{Functions with non-compact support}

Assuming that $u$ and $\nabla u$ have the right integrability properties, we are able to use density arguments to extend the validity of the previous Carleman-type estimate beyond the class of compactly supported functions. This is the content of the next result.

\begin{proposition}
    \label{coro6}
    
    Let $(M,g)$ be an $n$-dimensional complete Riemannian manifold with a warped cylindrical end  $\E(r_0) = \E^n_{\s,N}(r_{0})$, $r_0\ge 0$, where $(N,g_N)$ is compact without boundary and the warping function satisfies \begin{equation}\label{eqprop:sigma lapl}
        |(\log\sigma)'(r)| \le \kappa r, \quad r\gg 1
    \end{equation} 
    for some constant $\kappa>0$.
 Assume that the following Carleman-type estimate
\begin{equation}\label{eq12}
\int_{\E(r_0)} v^{2} \, k_1\, e^hd\mu_M + \int_{\E(r_0)} |\nabla v|^{2} \, k_2\, e^hd\mu_M \leq \int_{\E(r_0)}(\Delta v)^{2} \, e^hd\mu_M
\end{equation}
holds for every $v \in C^{\infty}_{c}( \E(r_0))$ and for some density functions $h : \E(r_0) \to \rr$ and  $k_1,k_2: \E(r_0) \to (0,+\infty)$.
Then the Carleman-type estimate
 \begin{equation}\label{eq13}
\int_{\E(r_0)}  w^{2}\, k_1 \, e^hd\mu_M +\int_{\E(r_0)}  |\nabla w|^{2}\, k_2 \, e^hd\mu_M \leq  \Lambda \int_{\E(r_0)}(\Delta w)^{2} \,e^hd\mu_M,
\end{equation}
holds for some universal constant $\Lambda=\Lambda(\kappa,n)> 0$ and for every $w \in L^{2} (\E(r_0), (1+k_2)\,e^hd\mu_M)$ satisfying $\supp w \subset \E(r_0)$ and $\||\nabla w|\|_{L^2(\E(r)\setminus \E(2r),e^hd\mu_M)}=o(r)$.
\end{proposition}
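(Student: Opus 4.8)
The plan is to extend the Carleman estimate \eqref{eq12} from $C^\infty_c(\E(r_0))$ to the wider class by a cutoff-and-density argument, where the crucial technical input is the existence of a good family of \emph{Laplacian cut-off functions} guaranteed by \eqref{eqprop:sigma lapl}. Concretely, under \eqref{eqprop:sigma lapl} one knows (cf. Remark \ref{rem: curvcond} and \cite{BS}) that there are functions $\chi_j \in C^\infty_c(\E(r_0))$ with $0\le \chi_j\le 1$, $\chi_j\equiv 1$ on $\E(r_0)\setminus\E(2^j)$ — more precisely on a region like $\E(r_0)$ intersected with a ball of radius $\sim 2^j$ — supported in the next dyadic shell, and satisfying the key bounds
\[
|\nabla \chi_j| = \CO(1),\qquad |\Delta \chi_j| = \CO(1)
\]
uniformly in $j$, with gradient and Laplacian supported in an annular region $\E(c\,2^j)\setminus\E(C\,2^j)$ whose inner radius $r_j\to+\infty$. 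This is exactly the place where \eqref{eqprop:sigma lapl} enters: without it one cannot simultaneously control $|\Delta\chi_j|$ and $|\nabla\chi_j|$ by an absolute constant.

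Given such $\chi_j$, for $w$ in the stated class I would apply \eqref{eq12} to $v=\chi_j \psi$ where $\psi\in C^\infty$ is a smooth function agreeing with $w$ (a preliminary smoothing of $w$ may be needed, but since the estimate only involves $w$, $\nabla w$, $\Delta w$ in $L^2$-weighted norms this is harmless) — or, more cleanly, first establish \eqref{eq13} for $w\in C^\infty$ with $\supp w\subset \E(r_0)$ and the stated integrability/growth, and then note this suffices because $\mathrm{W}$-type density (Remark \ref{rem: mazzeo}) already covers the smooth case after mollification. Expanding $\Delta(\chi_j w) = \chi_j \Delta w + 2\langle\nabla\chi_j,\nabla w\rangle + w\,\Delta\chi_j$ and using $(a+b+c)^2\le 3(a^2+b^2+c^2)$ gives
\[
\int (\Delta(\chi_j w))^2 e^h\,d\mu_M \le 3\int \chi_j^2 (\Delta w)^2 e^h\,d\mu_M + 12\int |\nabla\chi_j|^2|\nabla w|^2 e^h\,d\mu_M + 12\int |\Delta\chi_j|^2 w^2 e^h\,d\mu_M.
\]
On the left of \eqref{eq12} applied to $\chi_j w$, the integrals dominate $\int_{\E(r_0)\setminus\E(2^j)} (k_1 w^2 + k_2|\nabla w|^2)e^h\,d\mu_M$. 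So the whole game is to show the two error integrals on the right vanish as $j\to\infty$: the $|\Delta\chi_j|^2 w^2$ term is $\CO(1)\int_{\text{shell}_j} w^2 e^h\,d\mu_M\to 0$ because $w\in L^2((1+k_2)e^h d\mu_M)$ so its tail integral is infinitesimal; the $|\nabla\chi_j|^2|\nabla w|^2$ term is $\CO(1)\int_{\E(r_j)\setminus\E(2r_j)}|\nabla w|^2 e^h d\mu_M = \CO(1)\cdot o(r_j) \to 0$... wait — that is $o(r_j)$, not $o(1)$. This is the subtle point and the reason the hypothesis is phrased with $o(r)$: one must choose the cutoffs so that $|\nabla\chi_j|=\CO(1/r_j)$ on the shell of inner radius $r_j$, which is precisely what the Laplacian-cutoff construction from \cite{BS} provides (cutoffs that are roughly linear in $r$ transition over a shell of width $\sim r_j$, hence gradient $\sim 1/r_j$ and Laplacian $\sim 1/r_j^2 + \text{curvature terms} = \CO(1)$). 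Then $|\nabla\chi_j|^2 = \CO(r_j^{-2})$ and the error term is $\CO(r_j^{-2})\cdot o(r_j^2)\to 0$. Passing to the limit via Fatou on the left and dominated/monotone convergence on the right yields \eqref{eq13} with $\Lambda=3$, or after absorbing constants $\Lambda=\Lambda(\kappa,n)$.

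The main obstacle, and the step deserving the most care, is the construction and the quantitative control of the cut-off family $\{\chi_j\}$: one needs them \emph{adapted to the end} $\E(r_0)$ (not to a geodesic ball of $M$, since $\E(r_0)$ has a boundary and the $r$-coordinate is the relevant exhaustion), with $\chi_j$ depending only on $r$ on the relevant region so that $\Delta\chi_j = \chi_j'' + (n-1)(\sigma'/\sigma)\chi_j'$ — and here \eqref{eqprop:sigma lapl} gives $|(n-1)(\sigma'/\sigma)\chi_j'|\le (n-1)\kappa\, r_j \cdot \CO(r_j^{-1}) = \CO(1)$, which is exactly the bound needed. I would phrase the $r$-dependent cutoff explicitly: take $\chi_j(r)=\eta((r-r_j)/r_j)$ for a fixed $\eta\in C^\infty(\rr)$, $\eta\equiv 1$ on $(-\infty,0]$, $\eta\equiv 0$ on $[1,\infty)$, with $r_j=2^j$; then $\chi_j'=\CO(r_j^{-1})$, $\chi_j''=\CO(r_j^{-2})$, everything supported in $r\in[r_j,2r_j]$, and the bound \eqref{eqprop:sigma lapl} closes the estimate $|\Delta\chi_j| \le \CO(r_j^{-2}) + (n-1)\kappa\, r \cdot \CO(r_j^{-1}) = \CO(1)$ on that shell. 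The only remaining subtlety is matching the shell $[r_j,2r_j]$ with the growth hypothesis $\||\nabla w|\|_{L^2(\E(r)\setminus\E(2r),e^h d\mu_M)}=o(r)$, which is automatic with this choice of dyadic radii.
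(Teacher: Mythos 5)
Your proposal is correct and follows essentially the same route as the paper: radial cut-offs of the form $\Phi(r/R)$ whose gradient is $\CO(1/R)$ and whose Laplacian is $\CO(1)$ precisely because of \eqref{eqprop:sigma lapl}, then applying \eqref{eq12} to the truncated function, expanding $\Delta(\vp w)$ with the elementary inequality $(a+b+c)^2\le 3(a^2+b^2+c^2)$, and killing the two error terms via $w\in L^2((1+k_2)e^h d\mu_M)$ and the $o(r)$ growth of $\||\nabla w|\|_{L^2}$ on dyadic shells before letting $R\to\infty$ by monotone convergence. The only (immaterial) differences are that you lower-bound the left-hand side by restricting to the region where the cut-off equals $1$ instead of the paper's $\varepsilon$-Young step, and you gloss the smoothing needed to apply \eqref{eq12} to $\vp w$, at the same level of rigor as the paper itself.
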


\begin{proof}
    By the assumption \eqref{eqprop:sigma lapl}, there exists a family of smooth cut-off functions $\vp=\vp_{R} : \E(r_0) \to [0,1]$, $R>r_0$, satisfying the following conditions:
\begin{equation}\label{eq-cutoff}
\begin{array}{llll}
  (i) & \vp =1  \text{ on } \E(r_0) \setminus \E(R) \\
  (ii) & \supp \vp \subset \E(r_0) \setminus \E(2R)\\
  (iii) & \| \nabla \vp \|_{\infty} \leq {C}/{R}\\
  (iv) & \| \Delta \vp \|_{\infty} \leq C
\end{array}
\end{equation}
for some $C>0$ which depends on $n$ and $\kappa$;
    to this end, one can first choose $\Phi(r)$ so that $\Phi\in C^\infty([0,+\infty),[0,1])$, $\Phi\equiv 1$ in $[0,1]$ and $\Phi\equiv 0$ in $[2,+\infty)$;
    then, for $(r,x)\in\E(r_0)$, define $\vp_R(r,x)=\Phi(r/R)$ and note that 
    \[
\Delta \vp_R = \partial^2_{rr}\vp_R + (n-1)\frac{\sigma'}{\sigma}\partial_{r}\vp_R=\frac{1}{R^2}\Phi''(r/R) + (n-1)\frac{\sigma'}{\sigma}\frac{1}{R}\Phi'(r/R).
    \]    
Let $w \in L^{2}(\E(r_{0}), (1+k_2)\,e^hd\mu_M)$ satisfying $\||\nabla w|\|_{L^2(\E(r) \setminus \E(2r),e^hd\mu_M)}=o(r)$. 
First, notice that for any $\varepsilon>0$, by the Young's and Cauchy-Schwarz inequalities, it holds 
\begin{align*}
   &(1-\varepsilon) \int_{\E(r_0) \setminus \E(R)}  \vp^2|\nabla w|^{2} \, k_2 \,  e^hd\mu_M\\
   &\le \int_{\E(r_0) \setminus \E(R)}  |\nabla (\vp w)|^{2} \, k_2 \,  e^hd\mu_M + (\varepsilon^{-1}-1)\int_{\E(r_0) \setminus \E(R)}  w^2|\nabla \vp|^{2} \, k_2 \,  e^hd\mu_M.
\end{align*}
Then, from \eqref{eq12} we get
\begin{align}\label{eq13b}
 &\int_{\E(r_0) \setminus \E(R)}  w^{2} \, k_1 \,  e^hd\mu_M+(1-\varepsilon)\int_{\E(r_0) \setminus \E(R)}  |\nabla w|^{2} \, k_2 \,  e^hd\mu_M \\
 &\leq \int_{\E(r_{0})} (w\vp)^{2} \, k_1 \,  e^hd\mu_M +\int_{\E(r_{0})} |\nabla(w\vp)|^{2} \, k_2 \,  e^hd\mu_M + (\varepsilon^{-1}-1)\int_{\E(r_0) \setminus \E(R)}  w^2|\nabla \vp|^{2} \, k_2 \,  e^hd\mu_M \nonumber\\
 &
 \leq \int_{\E(r_{0})}(\Delta (w\vp))^{2} \, e^hd\mu_M+(\varepsilon^{-1}-1)\int_{\E(r_0) \setminus \E(R)}  w^2|\nabla \vp|^{2} \, k_2 \,  e^hd\mu_M.\nonumber
 \end{align}
 Now, using again the  Cauchy-Schwarz and Young's inequalities we obtain
 \begin{align*}
\int_{\E(r_{0})}( \Delta(w \vp))^{2} e^hd\mu_M &\leq  3\int_{\E(r_{0})} w^{2} (\Delta \vp)^{2} e^hd\mu_M + 12\int_{\E(r_{0})} |\nabla w|^{2} |\nabla \vp|^{2} e^hd\mu_M
\\
& 
+ 3 \int_{\E(r_{0})} (\Delta w)^{2} \vp^{2} e^hd\mu_M\\
&\leq 3  \| \Delta \vp\|^{2}_{\infty} \int_{\E(R) \setminus \E(2R)}w^{2} e^hd\mu_M + 12 \| \nabla \vp\|^2_{\infty} \int_{\E(R) \setminus \E(2R)} |\nabla w|^{2} e^hd\mu_M\\
& + 3 \int_{\E(r_0) \setminus \E(2R)} (\Delta w)^{2} e^hd\mu_M.
 \end{align*}
 Therefore, there exists a constant $\Lambda =\Lambda(n,\kappa)>0$ such that
 \begin{align}\label{eq13c}
 \int_{\E(r_{0})}(\Delta(w\vp))^{2}\, e^hd\mu_M  &\leq \Lambda\int_{\E(R) \setminus \E(2R)} w^{2} e^hd\mu_M + \frac{\Lambda}{R^{2} }\int_{\E(R) \setminus \E(2R)} |\nabla w |^{2}e^hd\mu_M\\ \nonumber
 &+\Lambda \int_{\E(r_0) \setminus \E(2R)} (\Delta w)^{2} e^hd\mu_M. \nonumber
 \end{align}
 
Note that, since $w \in L^{2} (\E(r_{0}), (1+k_2)\,e^hd\mu_M)$ and $\||\nabla w|\|_{L^2(\E(R) \setminus \E(2R),e^hd\mu_M)}=o(R)$, the first two integrals on the RHS vanish as $R \to +\infty$ . 
Similarly
\begin{equation}\label{eq13c'} \int_{\E(r_0) \setminus \E(R)}  w^2|\nabla \vp|^{2} \, k_2 \,  e^hd\mu_M \le \frac{\Lambda}{R^2}
\int_{\E(r_0) \setminus \E(R)}  w^2 \, k_2 \,  e^hd\mu_M \to 0,\qquad \text{ as } R\to +\infty.
\end{equation}
Therefore, to conclude, we insert \eqref{eq13c} and \eqref{eq13c'} into \eqref{eq13b} and we let $R \to +\infty$. By applying the monotone convergence theorem we obtain
\[
 \int_{\E(r_{0})}  w^{2} \, k_1 \,  e^hd\mu_M+(1-\varepsilon)\int_{\E(r_{0})}  |\nabla w|^{2} \, k_2 \,  e^hd\mu_M \le \Lambda \int_{\E(r_{0})} (\Delta w)^2\,e^hd\mu_M.
\]
As $\varepsilon>0$ can be chosen arbitrarily small, this permits to conclude.
\end{proof}

\section{Proof of the unique continuation result}
We are now in the position to give the proof of our main result.
\begin{proof}[Proof of Theorem \ref{th-meshkov}]
Let $\xi : M  \to [0,1]$ be any $C^{2}$ function such that $\xi=0$ on $M \setminus \E(r_0)$ and $\xi=1$ on $\E(r_{0}+1)$. For the moment, let $\tau>0$ be arbitrary. By defining $w=\xi \, u$ we have that $w\in  L^{2}(M,(1+k_2)\,e^{h_{\tau}}d\mu_M)$ with $\supp w \subset \E(r_0)$, and 
$\||\nabla w|\|_{L^2(\E(r) \setminus \E(2r),e^{h_{\tau}}d\mu_M)}=o(r)$. By Lemma \ref{lemma1grad} and
Proposition \ref{coro6}, we get the Carleman estimate for $w$:
\begin{equation}\label{eq12'}
\int_{\E(r_0)} u^{2}\xi^2\, k_{1,\tau}\, e^{h_{\tau}}d\mu_M + \int_{\E(r_0)} |\nabla (u\xi)|^2\, k_{2,\tau}\, e^{h_{\tau}}d\mu_M\leq  \Lambda \int_{\E(r_0)}(\Delta (u\xi))^{2} \, e^{h_{\tau}}d\mu_M.
\end{equation}
Let us compute
\begin{align*}
|\Delta (u\xi)|&=|u\Delta \xi + 2 \left\langle\nabla u,\nabla\xi\right\rangle + \xi\Delta u| \\
&\le  |u\Delta \xi + 2 \left\langle\nabla u,\nabla\xi\right\rangle| + \xi q_{1} |u| + \xi q_{2} |\nabla u|,
\end{align*}
and
\begin{align*}
|\Delta (u\xi)|^2&\leq
3|u\Delta \xi + 2 \left\langle\nabla u,\nabla\xi\right\rangle|^2 + 3 q_{1}^2\xi^2u^2+ 3 q_{2}^2\xi^2|\nabla u|^2.
\end{align*}
Inserting in \eqref{eq12'} gives
\begin{align*}
\int_{\E(r_0+1)} u^{2}\xi^2\, k_{1,\tau}\, e^{h_{\tau}}d\mu_M + \int_{\E(r_0+1)} |\nabla (u\xi)|^2\, k_{2,\tau}\, e^{h_{\tau}}d\mu_M&\leq  3\Lambda \int_{\E(r_0)}\left|u\Delta \xi + 2 \left\langle\nabla u,\nabla\xi\right\rangle\right|^2e^{h_{\tau}}d\mu_M \\
& +
3\Lambda \int_{\E(r_0)}
\left[q_{1}^2\xi^2u^2+  q_{2}^2\xi^2|\nabla u|^2
\right] \, e^{h_{\tau}}d\mu_M,
\end{align*}
so that
\begin{align*}
&\int_{\E(r_{0}+1)} \{k_{1,\tau}-3\Lambda  q_{1}^2\}u^{2} \, e^{h_{\tau}}d\mu_M + 
\int_{\E(r_{0}+1)} \{ k_{2,\tau}-3\Lambda  q_{2}^2\}|\nabla u|^{2} \, e^{h_{\tau}}d\mu_M \\
&\leq 3\Lambda \int_{\E(r_0)\setminus \E(r_0+1)}|u\Delta \xi + 2 \left\langle\nabla u,\nabla\xi\right\rangle|^2  \, e^{h_{\tau}}d\mu_M
+ 3\Lambda \int_{\E(r_0)\setminus \E(r_0+1)}
\left[q_{1}^2\xi^2u^2+  q_{2}^2\xi^2|\nabla u|^2
\right] \, e^{h_{\tau}}d\mu_M.
\end{align*}
Since $h_\tau$, hence $e^{h_\tau}$, are non-decreasing, dividing both sides of the latter by $e^{h_\tau(r_0+1)}$ we obtain
\begin{align*}
&\int_{\E(r_{0}+1)} \{k_{1,\tau}-3\Lambda  q_{1}^2\}u^{2} \, d\mu_M + 
\int_{\E(r_{0}+1)} \{ k_{2,\tau}-3\Lambda  q_{2}^2\}|\nabla u|^{2} \, d\mu_M \\
&\leq 3\Lambda \int_{\E(r_0)\setminus \E(r_0+1)}|u\Delta \xi + 2 \left\langle\nabla u,\nabla\xi\right\rangle|^2  \, d\mu_M 
+ 3\Lambda \int_{\E(r_0)\setminus \E(r_0+1)}
\left[q_{1}^2\xi^2u^2+  q_{2}^2\xi^2|\nabla u|^2
\right] \, d\mu_M.
\end{align*}
Suppose that \eqref{e:k infty} is satisfied if $\ell=1$, the other case being similar. Then
\begin{align*}
&\int_{\E(r_{0}+1)}    \left\{k_{1,\tau}\left(1-3\Lambda  \frac{q_{1}^2}{k_{1,\tau}}\right)\right\}u^{2} \, d\mu_M  \\
&\leq 3\Lambda \int_{\E(r_0)\setminus \E(r_0+1)}|u\Delta \xi + 2 \left\langle\nabla u,\nabla\xi\right\rangle|^2  \, d\mu_M 
+ 3\Lambda \int_{\E(r_0)\setminus \E(r_0+1)}
\left[q_{1}^2\xi^2u^2+  q_{2}^2\xi^2|\nabla u|^2
\right] \, d\mu_M.
\end{align*}
Since the RHS integral is finite and independent of $\tau$, assumptions \eqref{e:k infty} and \eqref{ass: kq} permit to apply Fatou's Lemma as $\tau\to\infty$ and deduce that $u \equiv 0$ on $\E(r_0+1)$. By standard local unique continuation results, \cite{Ka}, we conclude that $u\equiv 0$ on $M$.
\end{proof}

In the case where $h'_{\tau}$, $q_1$ and $q_2$ have controlled growth, the assumptions of Theorem \ref{th-meshkov} can be weakened by removing the integrability assumption on $|\nabla u|$.

\begin{proof}[Proof (of Corollary \ref{coro_meshkov})]
 Let $\vp_R$ be a family of cut-offs as in the proof of Proposition \ref{coro6} and define a new family of smooth cut-offs as $\psi=\psi_R=(1-\vp_R)\, \vp_{4R}$, so that $\psi_R$ is supported in $\E(R)\setminus \E(8R)$ and $\psi_R=1$ on $\E(2R)\setminus \E(4R)$. Integrating by parts we get
\begin{align*}
 \int_{\E(r_0)} \psi^2 |\nabla u|^{2} \, e^{h_{\tau}}d\mu_M = &- \int_{\E(r_0)} \psi^2 u \Delta u \, e^{h_{\tau}}d\mu_M\\ \nonumber
 &-  \int_{\E(r_0)} \psi^2 u \langle \nabla u ,\nabla h_\tau \rangle \, e^{h_{\tau}}d\mu_M - \int_{\E(r_0)} 2\psi u \langle \nabla u,\nabla \psi \rangle \, e^{h_{\tau}}d\mu_M. \nonumber
\end{align*}  
We apply Cauchy-Schwarz and Young's inequality to each of the RHS integrals, obtaining for any $\varepsilon>0$,
\begin{align*}
 \int_{\E(r_0)} \psi^2 |\nabla u|^{2} \, e^{h_{\tau}}d\mu_M 
 &\le  4\varepsilon^{2}\int_{\E(r_0)} \psi^2 u^2  \, e^{h_{\tau}}d\mu_M + \varepsilon^{-2} \int_{\E(r_0)} \psi^2  (\Delta u)^2 \, e^{h_{\tau}}d\mu_M\\ &+ \int_{\E(r_0)} \psi^2 u^2 |h_\tau'|^2 \, e^{h_{\tau}}d\mu_M
 + \frac{1}{4}\int_{\E(r_0)} \psi^2  | \nabla u|^2  \, e^{h_{\tau}}d\mu_M\\
 &+ 4 \int_{\E(r_0)} u^2 |\nabla \psi|^2 \, e^{h_{\tau}}d\mu_M
+ \frac{1}{4}\int_{\E(r_0)} \psi^2 | \nabla u|^2 \, e^{h_{\tau}}d\mu_M. 
\end{align*} 
Inserting
\[
(\Delta u)^2 \le 2 q_1^2 u^2 + 2q_2^2|\nabla u|^2
\]
gives, with $Q_{2,r}:=\sup_{\E(r)\setminus \E(8r)} q_2$, 
\begin{align*}
 \frac{1}{2}\int_{\E(r_0)} \psi^2 |\nabla u|^{2} \, e^{h_{\tau}}d\mu_M 
 &\le  \left(4\varepsilon^{2}+\sup_{\operatorname{supp}\psi}
 [2\varepsilon^{-2} q_1^2+|h_\tau'|^2]\right)\int_{\E(r_0)} \psi^2 u^2  \, e^{h_{\tau}}d\mu_M\\
&+ 4 \int_{\E(r_0)} u^2 |\nabla \psi|^2 \, e^{h_{\tau}}d\mu_M + 2\ve^{-2} Q_{2,r}^2\int_{\E(r_0)} \psi^2 |\nabla u|^{2} \, e^{h_{\tau}}d\mu_M.
\end{align*} 
Therefore, as soon as 
$\varepsilon^{-2} Q^2_{2,R}<1/8$, we get
\begin{align*}
 \frac{1}{4}\int_{\E(r_0)} \psi^2 |\nabla u|^{2} \, e^{h_{\tau}}d\mu_M 
 &\le  \left(4\varepsilon^{2}+\sup_{\operatorname{supp}\psi}
 [2\varepsilon^{-2} q_1^2+|h_\tau'|^2]\right)\int_{\E(r_0)} \psi^2 u^2  \, e^{h_{\tau}}d\mu_M\\
&+ 4 \int_{\E(r_0)} u^2 |\nabla \psi|^2 \, e^{h_{\tau}}d\mu_M\,.
\end{align*} 
By taking advantage of the properties \eqref{eq-cutoff} and dividing by $R^2$ we obtain
\begin{align*}
&\frac{1}{4R^2}\int_{\E(R)\setminus \E(8R)} |\nabla u|^{2} \, e^{h_{\tau}}d\mu_M\\ 
 &\le  \left(\frac{4}{R^2}\varepsilon^{2}+\frac{1}{R^2}\sup_{\operatorname{supp}\psi}
 [2\varepsilon^{-2} q_1^2+|h_\tau'|^2]\right)\int_{\E(R) \setminus \E(8R)}u^2  \, e^{h_{\tau}}d\mu_M 
+ \frac{\tilde{C}}{R^4} \int_{\E(r_0)} u^2  \, e^{h_{\tau}}d\mu_M\,,
\end{align*} 
for a constant $\tilde{C}$ depending only on $n$ and $\kappa$.
For $r\ge r_0$, define 
\[
Q_{1,r}:=\sup_{(s,x) \in \E(r)\setminus \E(8r)} ( |h_\tau'(s)| + q_1(s,x)/r).
\]
By \eqref{mainass: cor}, $Q_{1,r}=\CO(r)$ and $Q_{2,r}^2=\CO(r^2)$. 
Choose $\ve=\ve_R:=\max\{R,3\, Q_{2,R}\}$. Then $\varepsilon_R^{-2}Q^2_{2,R}<1/8$ and $\ve_R^{2}/R^2=\CO(1)$, so that 
\[
\frac{1}{R^2}\sup_{\operatorname{supp}\vp}
 [2\varepsilon_R^{-2} q_1^2+|h_\tau'|^2]\leq 2\ve_R^{-2} Q_{1,R}^2 + \frac{Q_{1,R}^2}{R^2} = \CO(1).
\]
Since $u\in L^2(\E(r_0),e^{h_{\tau}}d\mu_M)$ thanks to assumption \eqref{eq-decay}, by taking the limit as $R\to +\infty$ we get the validity of \eqref{eq-decay''} and thus the proof is concluded.
\end{proof}

\section{Some concrete examples}\label{section:concreteexamples}

The aim of this section is to show how much flexible the unique continuation result in Theorem \ref{th-meshkov} is by presenting a number of concrete situations where it applies. This will enable us to recover and extend known results in the Euclidean and Hyperbolic settings and to exemplify situations where wilder geometries at infinity appear. All of this, clearly, boils down to a suitable choice of the parameters and functions involved. As usual in the literature on the subject, in the statements below we will assume point-wise decays of the solution $u$ that imply the integral condition \eqref{eq-decay}.\smallskip

\subsection{Euclidean conical ends} 

On Euclidean conical ends, we point out the following unique continuation results.

\begin{corollary}\label{cor: concEucl}
Let $\E \subset \C^n_{Eu}(N)$ be a Euclidean conical end and let $u \in C^{2}(\E)$ be a solution of
\[
|\Delta u | \leq q_{1} |u| + q_{2} |\nabla u|
\]
for some $0\leq q_{1},q_{2} \in C^{0}(\E)$. Let $C_1,C_2$ be non-negative constants. Then $u \equiv 0$ in each of the following situations.\smallskip
\begin{enumerate}[a)]
 \item $q_{1}(r,x) =  C_{1} r^{-(2-\frac{3}{2}\b)}$, $q_{2}(r,x)=C_{2} r^{-(1-\frac{1}{2}\beta)}$, $0<\b \le 2$, and\smallskip
 \[
|u(r,x)| = \CO( e^{-\tau r^{\beta}}),\quad \forall\, \tau \gg 1.\smallskip
 \]
 \item $q_1(r,x) = C_{1} r^{-2}(\log{r})^{-(2-\frac{3}{2}\gamma)}$, $q_2(r,x) = C_{2}r^{-1}(\log{r})^{-(1-\frac{1}{2}\gamma)}$, $\gamma >1$,   and\smallskip
 \[
|u(r,x)| = \CO \left( e^{-\tau (\log{r})^{\gamma}} \right), \quad \forall \tau \gg 1.
 \]
\item $q_{1}(r,x) =  C_{1} r^{-(2-\frac{3}{2}\b)}$, $q_{2}(r,x)=C_{2} r^{-(1-\frac{1}{2}\beta)}$, $\b > 2$,\smallskip
 \[
|u(r,x)| = \CO( e^{-\tau r^{\beta}}),\quad \forall\, \tau \gg 1,\smallskip
 \]
 and
\begin{equation}\label{ass: notcoreucl}
\int_{\E(r_0)\setminus \E(2r)}|\nabla u|^2e^{2\tau r^{\b}}d\mu_M = o(r^2),\quad \forall\, \tau \gg 1\smallskip.
 \end{equation}
\end{enumerate}
\end{corollary}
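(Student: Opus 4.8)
The plan is to derive Corollary \ref{cor: concEucl} from Theorem \ref{th-meshkov} (or Corollary \ref{coro_meshkov} in case c)) by making a judicious choice of the auxiliary functions $h_\tau$, $G$, $k_{1,\tau}$, $k_{2,\tau}$, exploiting that on a Euclidean conical end we have $\s(r)=r$, hence $\s'/\s=1/r$ and in particular $|(\log\s)'(r)|=1/r\le\kappa r$ for $r\gg1$, so condition \eqref{sigma lapl} holds trivially. In all three cases I would take $G\equiv 0$ (so $G'\equiv 0$), which already simplifies the expressions for $F_\tau$ and $A_\tau$ considerably, and then set $h_\tau(r)=2\tau\phi(r)$ where $\phi(r)=r^\beta$ in cases a) and c), and $\phi(r)=(\log r)^\gamma$ in case b), so that the point-wise decay hypothesis $|u(r,x)|=\CO(e^{-\tau\phi(r)})$ (together with compactness of $N$) gives $u^2 e^{h_\tau}=\CO(e^{-2\tau\phi})$ near infinity, yielding \eqref{eq-decay} once one also checks the lower-order factor $1+k_{2,\tau}$ does not spoil integrability — which it will not, since $k_{2,\tau}$ will turn out to be polynomially bounded in $r$ times a power of $\tau$.

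The computational heart is the following: with $G'=0$ and $\s'/\s=1/r$ one gets $F_\tau=\tfrac12\big(h_\tau'+(n-1)/r\big)=\tau\phi'+\tfrac{n-1}{2r}$. The leading term as $r\to\infty$ is $\tau\phi'(r)$, which for $\phi=r^\beta$ is $\tau\beta r^{\beta-1}$ and for $\phi=(\log r)^\gamma$ is $\tau\gamma(\log r)^{\gamma-1}/r$. One then plugs into \eqref{a:k2} and the bound for $k_{1,\tau}$. A short asymptotic analysis shows that the dominant contribution to $2F_\tau^2-2(n-1)F_\tau\s'/\s+F_\tau'$ is $2F_\tau^2\sim 2\tau^2(\phi')^2$, while $-F_\tau'+2F_\tau\s'/\s$ has dominant term $\sim -\tau\phi''+2\tau\phi'/r$; and the leading term of $A_\tau$ is $F_\tau^3-F_\tau(F_\tau'+(n-1)F_\tau/r)\sim\tau^3(\phi')^3$, so $2A_\tau'\sim 6\tau^3(\phi')^2\phi''$. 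The point is then to choose $k_{2,\tau}$ of the right order — a constant multiple (in $r$, depending on $\tau$) small enough so that the minimum in \eqref{a:k2} is respected, and large enough that $k_{1,\tau}\gtrsim\tau^3(\phi')^2\phi''$ after subtracting the $k_{2,\tau}$-terms — and then verify \eqref{e:k infty} and \eqref{ass: kq}. For instance one finds $k_{1,\tau}$ grows like $\tau^3 r^{3\beta-4}$ in case a), and one must check $q_1^2/k_{1,\tau}=C_1^2 r^{-(4-3\beta)}/(\tau^3 r^{3\beta-4})=C_1^2/(\tau^3)\to0$: this is exactly why the exponents $q_1\sim r^{-(2-\frac32\beta)}$, $q_2\sim r^{-(1-\frac12\beta)}$ appear — they are calibrated so $q_1^2$ and $q_2^2 F_\tau^2$ match the $r$-growth of $k_{1,\tau}$, while the $\tau$-power is negative so the supremum ratio vanishes. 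The logarithmic case b) is the same mechanism with $\phi'=\gamma(\log r)^{\gamma-1}/r$, which explains the $(\log r)$-exponents $2-\frac32\gamma$, $1-\frac12\gamma$; the condition $\gamma>1$ is needed to make $\phi$ convex enough (i.e. $\phi'$ of the right sign/monotonicity and $\phi''$ controllable) for the sign conditions in \eqref{a:k2} to hold for large $r$.

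For cases a) and b) I would then invoke Corollary \ref{coro_meshkov}: one must check \eqref{mainass: cor}, i.e. $\sup_{\E(r)\setminus\E(8r)}(r|h_\tau'|+q_1+rq_2)=\CO(r^2)$. Since $h_\tau'=2\tau\phi'$ with $\phi'$ decaying (as $r^{\beta-1}$ with $\beta\le2$, or as $(\log r)^{\gamma-1}/r$), we get $r|h_\tau'|=\CO(r^\beta)=\CO(r^2)$ for $\beta\le2$, while $q_1=\CO(r^{-(2-3\beta/2)})$ and $rq_2=\CO(r^{3\beta/2-2+1})$ are $\CO(r^2)$ for the relevant ranges; so \eqref{eq-decay''} is not needed and the point-wise decay suffices. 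For case c), where $\beta>2$, the term $r|h_\tau'|\sim\tau r^\beta$ is no longer $\CO(r^2)$, so Corollary \ref{coro_meshkov} does not apply and one must keep the explicit gradient hypothesis \eqref{ass: notcoreucl}, which is precisely \eqref{eq-decay''} with $h_\tau=2\tau r^\beta$ (up to the harmless replacement of $\E(r_0)\setminus\E(2r)$ by $\E(r)\setminus\E(2r)$, absorbing the compact piece). The main obstacle I expect is purely the bookkeeping: verifying that there is an admissible choice of $k_{2,\tau}$ making both bounds \eqref{a:k2}–\eqref{eq1th} and the lower bound on $k_{1,\tau}$ simultaneously nonnegative for $r\gg1$, and that both tend to $+\infty$ in $\tau$ with $q_\ell^2/k_{\ell,\tau}\to0$ — this requires tracking several competing asymptotic terms, but no genuinely new idea beyond the leading-order analysis sketched above.
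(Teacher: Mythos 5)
Your overall strategy is the same as the paper's (apply Theorem \ref{th-meshkov}, resp.\ Corollary \ref{coro_meshkov}, on the cone $\s(r)=r$ with $h_\tau$ a multiple of $r^\beta$ or $(\log r)^\gamma$), but the step you dismiss as ``simplification'' --- taking $G\equiv 0$ --- is exactly where the argument breaks, and your own leading-order formula shows it. With $G\equiv 0$ one has $F_\tau=\tau\phi'+\tfrac{n-1}{2r}$ and, as you write, $2A_\tau'\sim 6\tau^3(\phi')^2\phi''$, which is the dominant ($\tau^3$) term in the admissible upper bound for $k_{1,\tau}$. But $\phi''<0$ precisely in the interesting regimes: for $\phi=r^\beta$ with $0<\beta<1$ (and more generally the bound $2A_\tau'-k_{2,\tau}F_\tau^2-\dots$ stays negative at leading order for $\beta\le 1$), and for $\phi=(\log r)^\gamma$ for \emph{every} $\gamma$, since $(\log r)^\gamma$ is concave at infinity --- so your remark that ``$\gamma>1$ makes $\phi$ convex enough'' is not correct. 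In those cases the upper bound for $k_{1,\tau}$ is negative for $r\gg1$ and $\tau$ large, hence no admissible $k_{1,\tau}\ge 0$ exists and \eqref{ass: kq} with $\ell=1$ cannot be arranged when $C_1>0$. Similarly, in case c) with $\beta\ge 3$ and $G\equiv0$ one gets $k^R_{2,\tau}=-F_\tau'+2F_\tau/r\sim\tau\beta(3-\beta)r^{\beta-2}<0$, so \eqref{a:k2} admits no nonnegative $k_{2,\tau}$ and \eqref{ass: kq} with $\ell=2$ fails when $C_2>0$.

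The missing idea is that $G$ must be chosen nontrivially so that the term $2A_\tau G'$ (of the same order $\tau^3(\phi')^3G'$) compensates the bad sign of $2A_\tau'$: the paper takes $G(r)=(3-2\beta)\log r$ in cases a), c), giving $2(A_\tau'+A_\tau G')\sim 2\tau^3\beta^4 r^{3\beta-4}>0$ and $k^R_{2,\tau}\sim\tau\beta^2 r^{\beta-2}>0$ for all $\beta>0$, and $G(r)=3\log r-(2\gamma-2)\log\log r$ in case b), giving $2(A_\tau'+A_\tau G')\sim \tfrac14\tau^3\gamma^3(\gamma-1)(\log r)^{3\gamma-4}r^{-4}>0$ for $\gamma>1$. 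With these choices the calibration of the exponents of $q_1,q_2$ and the verification of \eqref{e:k infty}, \eqref{ass: kq}, \eqref{eq-decay}, and of \eqref{mainass: cor} in cases a), b) (and the role of \eqref{ass: notcoreucl} in case c)) proceed essentially as you describe; but as it stands your proposal has a genuine gap, not a bookkeeping issue, because with $G\equiv 0$ there is no admissible pair $(k_{1,\tau},k_{2,\tau})$ in most of the stated parameter ranges.
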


\begin{remark}
When $N= \ss^{n-1}$, and hence $\C^n_{Eu}(N)= \rr^n\setminus\{0\}$, case a) with $\b=4/3$ includes a well-known result by Meshkov, \cite{Me1}. On the other hand, case b)  with $\gamma = 4/3$, and hence a quadratic decay of $q_1$, is related to a celebrated sharp result due independently to Meshkov, \cite{Me2}, and Pan-Wolff, \cite{PW} (see also \cite{Su2} by Sun for an interesting generalization to cylinders over Zoll manifolds) which achieved unique continuation for solutions decaying more than polynomially. Their sharp decays require different techniques which work on very special spectral geometries. We will investigate this direction in a subsequent work.  
Conversely, the general method we presented in this paper, although not sharp in specific settings, permits to deal with more general geometries. As a matter of fact, Meshkov-Pan-Wolff result requires the constant $C_2$ to be small (also when $C_1=0$), while the value of $C_2$ seems not to play a role in our approach. 
\end{remark}

The proof of Corollary \ref{cor: concEucl} and of all the other results of Section \ref{section:concreteexamples} is based on a direct applications of Theorem \ref{th-meshkov} and Corollary \ref{coro_meshkov}: we exhibit a possible choice of the functions $\sigma, h_\tau, G$ that leads to the desired unique continuation result. A straightforward computation permits to check that the assumptions are satisfied in these settings and allow to apply our main theorem or even its corollary. For the reader convenience, we add some details about the relevant quantities appearing in the computations.\smallskip

Keeping the notation introduced in the statement of Theorem \ref{th-meshkov}, we set 
\[
k^L_{2,\tau}:=2F_\tau^{2} - 2(n-1) \frac{F_\tau\s'}{ \s} + F_\tau' + F_\tau G' 
\]
and 
\[
k^R_{2,\tau}:=-F_\tau'-F_\tau G'+2\frac{F_\tau\s'}{\s},
\]
so that the condition \eqref{a:k2} reads 
 \[
0\le k_{2,\tau} \le 2\min\left\{k^L_{2,\tau} \ ;\ k^R_{2,\tau}\right\}.
\]

\begin{proof}
a) We choose 
\[
\begin{cases}
 \s(r) = r\\
 h_\tau(r) = 2\tau r^{\beta}\\
G(r) = (3-2\beta)\log(r)
\end{cases}
\]
which give $F_\tau(r)=\beta\tau r^{\beta-1}+\frac{n-4+2\beta}{2r}$. We obtain $k_{2,\tau}^L=2\beta^2\tau^2r^{2\beta-2}+\tau o(r^{2\beta-2})$ and $k_{2,\tau}^R=\tau\beta^2r^{\beta-2}+o(r^{\beta-2})$, so that for $r_0$ sufficiently big and $\tau\gg 1$ (depending on $r_0$) $k_{2,\tau}:=\tau\beta^2r^{\beta-2}$ is admissible, and with this choice the constraint on $k_1$ reads $0\le k_1\le \tau^3\beta^4r^{3\beta-4}+\tau^2o(r^{3\beta-4})$, leading to the result.

b) We choose
\[
\begin{cases}
\s(r) = r\\
h_\tau(r) = \tau (\log{r})^\gamma\\
G(r)= 3\log{r}-(2\gamma-2)\log(\log{r})
\end{cases}
\]
which give $F_\tau(r)=\frac{1}{2r}\left(\tau \gamma(\log{r})^{\gamma-1}+n-4+\frac{(2\gamma-2)}{\log{r}}\right)$. We obtain $k_{2,\tau}^L=\frac{1}{2r^2}\tau^2 \gamma^2(\log{r})^{2\gamma-2}+\tau o(\frac{(\log{r})^{2\gamma-2}}{r^2})$ and $k_{2,\tau}^R=\frac{1}{2r^2}\tau\gamma(\gamma-1)(\log{r})^{\gamma-2}$, so that for $r_0$ sufficiently big and $\tau\gg1$ (depending on $r_0$) $k_{2,\tau}:=\tau\gamma^2r^{\gamma-2}$ is admissible, and with this choice the constraint on $k_1$ reads as $0\le k_1\le \frac{\tau^3\gamma^3(\gamma-1)(\log{r})^{3\gamma-4}}{8r^4}+\tau^2 o\left(\frac{(\log{r})^{3\gamma-4}}{8r^4}\right)$, leading to the result.

c) We make the same choices as in the case a). The only difference here is that we have to further assume \eqref{ass: notcoreucl} since the hypothesis of Corollary \ref{coro_meshkov} is not satisfied. 
\end{proof}

\subsection{Hyperbolic conical ends}
On Hyperbolic conical ends, we point out the following unique continuation results.

\begin{corollary}\label{cor: hyp}
Let $\E \subset \C^n_{Hyp}(N)$ be a Hyperbolic conical end and let $u \in C^{2}(\E)$ be a solution of
\[
|\Delta u | \leq q_{1} |u| + q_{2} |\nabla u|
\]
for some $0\leq q_{1},q_{2} \in C^{0}(\E)$. Let $C_1,C_2$ be non-negative constants. Then $u \equiv 0$ in each of the following cases.
\begin{enumerate}[a)]
\item $q_1(r,x) \equiv C_1r^{\frac{3\beta-3}{2}}$, $q_2(r,x) \equiv C_2r^{\frac{\beta-1}{2}}$, $1\le \beta\le 2$, 
\[
|u(r,x)|=\CO (e^{-\tau r^\beta}),\qquad \forall\,  \tau  \gg 1,\smallskip
\]
\item $q_1(r,x) \equiv C_1r^{\frac{3\beta-3}{2}}$, $q_2(r,x) \equiv C_2r^{\frac{\beta-1}{2}}$, $\beta> 2$, 
\[
|u(r,x)|=\CO (e^{-\tau r^\beta}),\qquad \forall\,  \tau  \gg 1,\smallskip
\]
and
\begin{equation}\label{ass: notcorhyp}
 \int_{\E(r_0)\setminus \E(2r)}|\nabla u|^2 e^{2\tau r^\b} d\mu_M = o(r^2)\,\qquad \forall\,  \tau  \gg 1.\smallskip
\end{equation}
\end{enumerate}
\end{corollary}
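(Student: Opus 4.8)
The plan is to derive case (a) from Corollary~\ref{coro_meshkov} and case (b) from Theorem~\ref{th-meshkov}, in both situations making the choices
\[
\sigma(r)=\sinh r,\qquad h_\tau(r)=2\tau r^{\beta},\qquad G(r)=r.
\]
The weight $h_\tau$ is dictated by the prescribed decay $|u|=\CO(e^{-\tau r^{\beta}})$, and since $(\log\sigma)'=\coth r$ stays bounded as $r\to+\infty$, condition~\eqref{sigma lapl} holds for every $\kappa>0$ once $r_0$ is large, so the Laplacian cut-offs used in Proposition~\ref{coro6} and Corollary~\ref{coro_meshkov} are available. With these data one computes
\[
F_\tau(r)=\tau\beta\,r^{\beta-1}+\tfrac12\big((n-1)\coth r-1\big),
\]
and the standing assumption $\beta\ge1$ guarantees that the first summand dominates, so $F_\tau\sim\tau\beta\,r^{\beta-1}$ and hence $A_\tau\sim F_\tau^{3}\sim\tau^{3}\beta^{3}r^{3\beta-3}$ as $r\to+\infty$. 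The decisive point — and the place where the hyperbolic geometry enters — is that, since $G'\equiv1$ is a nonzero \emph{constant}, the term $A_\tau G'$ has the same leading order $\tau^{3}\beta^{3}r^{3\beta-3}$ as $A_\tau$; this supplies exactly the extra power of $r$ relative to the Euclidean computation in the proof of Corollary~\ref{cor: concEucl}, and it is what makes the bound on $k_{1,\tau}$ compatible with $q_1^{2}=C_1^{2}r^{3\beta-3}$.

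Next I would check admissibility of the density functions. A direct expansion gives $k^{R}_{2,\tau}=-F_\tau'+(2\coth r-1)F_\tau\sim\tau\beta\,r^{\beta-1}$, positive since $2\coth r-1>1$, whereas $k^{L}_{2,\tau}\sim2F_\tau^{2}\sim2\tau^{2}\beta^{2}r^{2\beta-2}$ is of strictly larger order in both $r$ and $\tau$, so $\min\{k^{L}_{2,\tau};k^{R}_{2,\tau}\}=k^{R}_{2,\tau}$ for $\tau\gg1$. I would then take the deliberately slow-growing $k_{2,\tau}:=\sqrt{\tau}\,r^{\beta-1}$: for $r_0$ large and $\tau\gg1$ it satisfies \eqref{a:k2}, it diverges as $\tau\to+\infty$, and $\sup_{\E(r_0)}q_2^{2}/k_{2,\tau}=C_2^{2}/\sqrt{\tau}\to0$. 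Because $k_{2,\tau}$ grows only like $\sqrt{\tau}$, the correction terms $k_{2,\tau}F_\tau^{2}$, $G'k_{2,\tau}F_\tau$, $k_{2,\tau}'F_\tau$, $k_{2,\tau}F_\tau'$ in the constraint on $k_{1,\tau}$ are all of strictly lower order (in $r$ or in $\tau$) than $2(A_\tau'+A_\tau G')\sim2\tau^{3}\beta^{3}r^{3\beta-3}$, so one may set $k_{1,\tau}:=\tau^{3}r^{3\beta-3}$: it is admissible for $\tau\gg1$, diverges, and $\sup_{\E(r_0)}q_1^{2}/k_{1,\tau}=C_1^{2}/\tau^{3}\to0$. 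Finally I would verify \eqref{eq-decay} by using the hypothesis with a rescaled parameter, $|u|=\CO(e^{-2\tau r^{\beta}})$, which gives $u^{2}(1+k_{2,\tau})e^{h_\tau}=\CO\big(r^{\beta-1}e^{-2\tau r^{\beta}}\big)$, integrable against $\sinh^{n-1}r\,dr\,d\mu_N$ for $\tau\gg1$ (again $\beta\ge1$ ensures $e^{-2\tau r^{\beta}}$ beats the volume density $\sinh^{n-1}r$).

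It then remains to separate the two cases. In case (a), with $1\le\beta\le2$, the dominant contribution to \eqref{mainass: cor} on $\E(r)\setminus\E(8r)$ is $r\,|h_\tau'|=\CO(r^{\beta})$, which is $\CO(r^{2})$ precisely because $\beta\le2$, while $q_1=\CO(r^{(3\beta-3)/2})$ and $r\,q_2=\CO(r^{(\beta+1)/2})$ are $\CO(r^{2})$ in this range too; hence Corollary~\ref{coro_meshkov} applies, assumption~\eqref{eq-decay''} is not needed, and $u\equiv0$. In case (b), with $\beta>2$, \eqref{mainass: cor} fails, but the extra hypothesis \eqref{ass: notcorhyp} is exactly \eqref{eq-decay''} for the weight $h_\tau=2\tau r^{\beta}$ (in fact on a larger region), so Theorem~\ref{th-meshkov} applies directly and again $u\equiv0$. (When $\beta=1$ the coefficients $q_1,q_2$ are bounded and one recovers the classical hyperbolic unique continuation of Mazzeo.) The main obstacle will be the uniform bookkeeping: one must check that, for $r_0$ large and $\tau\gg1$, the displayed leading terms genuinely dominate the $\coth$-corrections and the lower-order remainders \emph{uniformly in $\tau$} over all of $\E(r_0)$ — it is precisely this, together with the positivity of $k^{R}_{2,\tau}$, that pins down the range $\beta\ge1$ and the choice $G(r)=r$.
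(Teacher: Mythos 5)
Your proposal is correct and follows essentially the same route as the paper: the same data $\sigma(r)=\sinh r$, $h_\tau(r)=2\tau r^{\beta}$, $G(r)=r$ fed into Theorem \ref{th-meshkov} (case b, with \eqref{ass: notcorhyp} supplying \eqref{eq-decay''}) and Corollary \ref{coro_meshkov} (case a, via \eqref{mainass: cor} and $\beta\le 2$). The only deviation is your choice $k_{2,\tau}=\sqrt{\tau}\,r^{\beta-1}$, $k_{1,\tau}=\tau^{3}r^{3\beta-3}$ in place of the paper's $k_{2,\tau}=\tau\beta r^{\beta-1}$ with $k_{1,\tau}\sim\tau^{3}\beta^{3}r^{3\beta-3}$, which is an inessential variant satisfying the same admissibility and limit conditions.
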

\begin{remark}
In case $N=\ss^{n-1}$, and hence $\C^n_{Hyp}(N)= \hh^n \setminus \{0\}$,  Corollary \ref{cor: hyp} a) with $\b=1$ is related to a result by Mazzeo, \cite{Ma}, on the whole hyperbolic space. Actually, in \cite{Ma} the unique continuation is proved on a horoball provided the function $u$, vanishing on the horosphere and extended to be $0$ outside the horoball, is in the appropriate Sobolev class. Thanks to Remark \ref{rem: mazzeo}, also this version can be achieved by applying  Theorem \ref{th-meshkov} with $N = \rr^{n-1}$, $\s(r) = \cosh(r)$, $h(r) = 2 \tau r$, $G(r)= r$. \end{remark}

\begin{proof}
a) We choose
\[
\begin{cases}
\s(r) = \sinh(r)\\
h_\tau(r) = 2\tau r^\beta\\
G(r)= r
\end{cases}
\]
which give $F_\tau(r)=\tau\b r^{\b-1} +\frac{n-1}{2} \coth r - \frac{1}{2}$. We obtain $k_{2,\tau}^L=2\tau^2\b^2r^{2\b-2}+\tau o(r^{2\b-2})$, and $k_{2,\tau}^R=\tau(2\coth{r}-1)\b r^{\b-1}+\tau o(r^{\b-1})$ so that for $r_0$ sufficiently big and $\tau\gg1$ (depending on $r_0$) $k_{2,\tau}:=\tau \b r^{\b-1}$ is admissible, and with this choice the constraint on $k_1$ reads as $0\le k_1\le \tau^3\b^3 r^{3\b-3}+\tau^3 o(r^{3\b-3})$, leading to the result.

b) We make the same choices as in the case a). The only difference here is that we have to further assume \eqref{ass: notcorhyp} since the hypothesis of Corollary \ref{coro_meshkov} is not satisfied.
\end{proof}

\subsection{Further geometries}
Our theorem also applies to geometries different from the Euclidean and the Hyperbolic ones. To give a glimpse of the possible applications, we discuss some unique continuation results in spaces with a warped cylindrical end with warping function $\s(r)=e^{r^{\beta}}$ for some $0<\beta\le 2$, where the upper bound is imposed so that the assumption \eqref{sigma lapl} is satisfied. 

\begin{corollary}\label{cor: exotgeom}
Let $(M,g)$ be an $n$-dimensional complete Riemannian manifold with a warped cylindrical end  $\E=\E^{n}_{\s,N}$, where $(N,g_N)$ is compact without boundary. Let $C_1,C_2$ be non-negative constants. 

a) Let $\sigma(r)=e^{r^{\b}}$, $0<\b<1$. Let $u \in C^{2}(\E)$ be a solution of
\[
|\Delta u | \leq q_{1} |u| 
\]
for some $0\leq q_{1}\in C^{0}(\E)$. Then $u \equiv 0$ if $q_{1}(r,x) \equiv C_1$ and \smallskip
 \[
|u(r,x)| = \CO \left(e^{-\tau r^{(4-\b)/3}}\right),\qquad \forall\,  \tau  \gg 1.\smallskip
 \]

b) Let $\sigma(r)=e^{r^{\b}}$, $0<\b\le 2$. Let   $u \in C^{2}(\E)$ be a solution of
\[
|\Delta u | \leq q_{1} |u|  + q_{2} |\nabla u|
\]
for some $0\leq q_{1}, q_{2}\in C^{0}(\E)$. Then $u \equiv 0$ if $q_{1}(r,x) \equiv C_1r^{2\b-2}$, $q_{2}(r,x) \equiv C_2r^{\b-1}$  and \smallskip
 \[
|u(r,x)| = \CO \left(e^{-\tau r^{\beta}}\right),\qquad \forall\,  \tau  \gg 1.\smallskip
 \]
\end{corollary}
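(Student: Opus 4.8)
The plan is to deduce both statements from Corollary~\ref{coro_meshkov}, by exhibiting explicit choices of $h_\tau$ and $G$ in the spirit of the proofs of Corollaries~\ref{cor: concEucl} and~\ref{cor: hyp} (the warping $\sigma(r)=e^{r^\beta}$ is prescribed). Since $(\log\sigma)'(r)=\beta r^{\beta-1}$, condition~\eqref{sigma lapl} holds for $r\gg1$ precisely because $\beta\le2$, which is what fixes the admissible range of the exponent. Guided by the Hyperbolic case, where $G(r)=r$ is the primitive of $(\log\sigma)'$, I would take in both cases $G(r)=r^\beta$, so that $G'=\beta r^{\beta-1}=\sigma'/\sigma$; and for the weight I would set $h_\tau(r)=2\tau r^{\beta}$ in case b) and $h_\tau(r)=2\tau r^{(4-\beta)/3}$ in case a). The exponent $(4-\beta)/3$ interpolates between the Meshkov exponent $4/3$ (as $\beta\to0$, when the end degenerates to a product cylinder) and the Mazzeo exponent $1$ (as $\beta\to1$, when $\sigma\sim e^{r}$).

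With these choices one computes $F_\tau$ and keeps leading orders in $\tau$. In case b), $F_\tau=\tfrac{(2\tau+n-2)\beta}{2}r^{\beta-1}\sim\tau\beta r^{\beta-1}$, and feeding this into $k^L_{2,\tau},k^R_{2,\tau}$ gives $\min\{k^L_{2,\tau},k^R_{2,\tau}\}\sim k^R_{2,\tau}\sim\tau\beta^2 r^{2\beta-2}$, so that for $r_0,\tau\gg1$ the choice $k_{2,\tau}:=\tau\beta^2 r^{2\beta-2}$ is admissible; then from~\eqref{eq1th} one finds $A_\tau\sim\tau^3\beta^3 r^{3\beta-3}$, the term $A_\tau G'\sim\tau^3\beta^4 r^{4\beta-4}$ dominates $A_\tau'$, and since $k_{2,\tau}F_\tau^2\sim\tau^3\beta^4 r^{4\beta-4}$ absorbs exactly half of $2A_\tau G'$, the right-hand side of the $k_1$-constraint is $(\beta^4+o(1))\tau^3 r^{4\beta-4}$, whence $k_{1,\tau}:=\tfrac12\beta^4\tau^3 r^{4\beta-4}$ does the job. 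In case a) one has $q_2\equiv0$, so $k_{2,\tau}$ is ancillary (any small positive multiple of $k^R_{2,\tau}$ is admissible, and only $\ell=1$ is needed in~\eqref{e:k infty}--\eqref{ass: kq}); here $F_\tau\sim\tfrac{4-\beta}{3}\tau r^{(1-\beta)/3}$ — the positivity of the exponent $(1-\beta)/3$ being exactly what forces $\beta<1$ — so $A_\tau\sim(\tfrac{4-\beta}{3})^3\tau^3 r^{1-\beta}$ and $A_\tau G'\sim\beta(\tfrac{4-\beta}{3})^3\tau^3$ is of order $r^{0}$, giving the admissible constant-in-$r$ choice $k_{1,\tau}:=\tfrac12\beta(\tfrac{4-\beta}{3})^3\tau^3$. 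In both cases the $r$-weights of $q_1,q_2$ are tailored so that $q_1^2/k_{1,\tau}=\CO(\tau^{-3})$ and $q_2^2/k_{2,\tau}=\CO(\tau^{-1})$ carry no $r$-dependence, whence~\eqref{e:k infty} and~\eqref{ass: kq} hold; the integrability~\eqref{eq-decay} follows because the assumed decay of $u$, with $\tau$ replaced by a sufficiently large constant as allowed, beats the volume growth $\sigma^{n-1}=e^{(n-1)r^\beta}$ (in case a) this again uses $(4-\beta)/3>\beta$, i.e.\ $\beta<1$); and~\eqref{mainass: cor} holds because on dyadic annuli $r|h_\tau'|$, $q_1$ and $rq_2$ are all $\CO(r^{2})$ thanks to $\beta\le2$, so Corollary~\ref{coro_meshkov} applies and no gradient bound is required.

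The conceptual point, and the source of the exponent $(4-\beta)/3$, is the choice of $G$. With the naive $G\equiv0$ one finds $A_\tau'\sim\tau^3 r^{-\beta}$, which forces $k_{1,\tau}$ to decay in $r$; but in case a) $q_1\equiv C_1$ does not decay, so $\sup_{\E(r_0)}q_1^2/k_{1,\tau}=+\infty$ and~\eqref{ass: kq} fails. Choosing $G=r^\beta$ instead makes $A_\tau G'$ of order $\tau^3 r^{3\alpha+\beta-4}$, where $\alpha$ is the exponent in $h_\tau=2\tau r^\alpha$; requiring this to be of order $r^{0}$, so that $k_{1,\tau}$ and hence $q_1^2/k_{1,\tau}$ may be taken constant in $r$, forces $\alpha=(4-\beta)/3$ in case a), while in case b) the prescribed weights $q_1=C_1r^{2\beta-2}$, $q_2=C_2r^{\beta-1}$ are already matched to $A_\tau G'\sim r^{4\beta-4}$ and $k^R_{2,\tau}\sim r^{2\beta-2}$, so the plain weight $h_\tau=2\tau r^{\beta}$ suffices. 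A related subtlety is that one must not saturate the bound on $k_{2,\tau}$, since $k_{2,\tau}F_\tau^2$ is of the same order as $2A_\tau G'$ and would cancel it entirely; taking $k_{2,\tau}$ equal to, say, half its maximal value is what leaves a positive leading term in the $k_1$-constraint. Beyond this I do not expect any single hard inequality: the real work is the bookkeeping, i.e.\ checking that in each displayed condition of Theorem~\ref{th-meshkov} the stated leading term genuinely dominates the remainder uniformly for all $r>r_0$ (with $r_0$ chosen large) and all $\tau\gg1$, which means tracking the powers of $\tau$ and of $r$ simultaneously and using the signs of exponents such as $(1-\beta)/3$ and $\beta-1$ dictated by the range of $\beta$.
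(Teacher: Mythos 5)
Your proposal is correct and follows essentially the same route as the paper: the paper proves both parts by applying Corollary \ref{coro_meshkov} with $G(r)=r^{\beta}$ and $h_\tau(r)=\tau r^{(4-\beta)/3}$ in case a), resp. $h_\tau(r)=\tau r^{\beta}$ in case b), and obtains the same leading asymptotics for $F_\tau$, $k_{2,\tau}$ and the $k_1$-constraint that you derive (your extra factor $2$ in $h_\tau$ merely rescales $\tau$). The only cosmetic difference is that in case a) the paper simply takes $k_{2,\tau}=0$ instead of a small positive multiple of $k^{R}_{2,\tau}$.
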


\begin{proof}
a) We choose
\[
\begin{cases}
\s(r) = e^{r^{\beta}}\\
h_\tau(r) = \tau r^{\frac{4-\b}{3}}\\
G(r)= r^{\beta}
\end{cases}
\]
which give $F_\tau(r)=\frac{\tau (4-\b)}{6} r^{\frac{1-\beta}{3}}+\frac{(n-2)\beta}{2}r^{\beta-1}$. We obtain $k_{2,\tau}^L=\frac{\tau^2(4-\beta)^2}{18}r^{\frac{2-2\b}{3}}+\tau o(r^{\frac{2-2\b}{3}})$, and $k_{2,\tau}^R=\frac{\tau\beta(4-\beta)}{6}r^{\frac{2\beta-2}{3}}+o(r^{\frac{2\beta-2}{3}})$ so that for $r_0$ sufficiently big and $\tau\gg1$ (depending on $r_0$) $k_{2,\tau}:=0$ is admissible, and with this choice the constraint on $k_1$ reads as $0\leq k_1 \leq \frac{\tau^3(4-\beta)^3}{216}\beta+\tau^2 o(1)$, leading to the result.

b) We choose
\[
\begin{cases}
\s(r) = e^{r^{\beta}}\\
h_\tau(r) = \tau r^{\beta}\\
G(r)= r^{\beta}
\end{cases}
\]
which give $F_\tau(r)=\frac{\tau+n-2}{2}\beta r^{\beta-1}$. We obtain $k_{2,\tau}^L=\frac{(\tau+n-2)^2}{2}\beta^2r^{2\beta-2}+\tau \CO(2r^{2\beta-2})$, and $k_{2,\tau}^R=\frac{\tau+n-2}{2}\beta^2 r^{2\beta-2}+\tau o(r^{2\beta-2})$ so that for $r_0$ sufficiently big and $\tau\gg1$ (depending on $r_0$) $k_{2,\tau}:=\frac{\tau+n-2}{2}\beta^2 r^{2\beta-2}$ is admissible, and with this choice the constraint on $k_1$ reads as $0\le k_1\le \frac{(\tau+n-2)^3}{8}\beta^4r^{4\beta-4}+\tau^2o(r^{4\beta-4})$, leading to the result.
\end{proof}

\section{Geometric applications}
In this section we present two geometric applications of our unique continuation at infinity on Hyperbolic conical ends, Corollary \ref{cor: hyp}, in two completely different settings. The first application, related to inequalities of the form $|\Delta u| \leq C |u|$, is an asymptotic  estimate of the conformal factor in the prescribed scalar curvature problem. The second application, based on the inequality $|\Delta u| \leq q(x) |\nabla u|$, concerns minimal graphs.

\subsection{Conformal deformations}

Consider a Riemannian manifold $(M,g)$ with a hyperbolic conical end  $\E \subset \C^n_{Hyp}(N)$, $n \geq 3$, and the conformal metric
\[
\tilde g = u^{\frac{4}{n-2}}g, \]
with $C^\infty$ conformal factor $u:M\to (0,+\infty)$. The scalar curvatures of $g$ and $\tilde g$ are denoted by $S$ and $\tilde S$, respectively. When $N = \ss^{n-1}$ and, hence, $\E = \hh^{n} \setminus B_{R}$ is the exterior of a ball in the standard Hyperbolic space, some non-existence results for a  deformation $ u \geq \mathrm{const} >0$ with positive scalar curvature $\tilde S \geq \mathrm{const}>0$ are obtained in \cite{BPS} as a consequence of the Feller property and the compact support principle. On the other hand, when $\tilde S(x) <0$ (possibly off a compact set), a-priori estimates both from above and from below on the solution $u$ are widely studied; for an account on this subject we refer to \cite{MRS}.

Here, using unique continuation at infinity, we point out how a-priori decay estimates of $u$ must be satisfied  compared with the growth rate of $\tilde S(x)$. No sign assumption on $\tilde S$ is required.
\begin{theorem}\label{th:confdef}
Assume that, on the $n$-dimensional Hyperbolic conical end $\E \subset \C_{Hyp}(N)$, it holds
\[
\sup_{x \in N}| \tilde S(r,x)|  = \CO \left(e^{\a(r)}\right)
\]
for some function $\a: [0,+\infty) \to (0,+\infty) $ satisfying
\[
 \lim_{t \to +\infty} \frac{\a(t)}{t} = +\infty.
\]
Then, necessarily,
\[
 \limsup_{r \to +\infty} {e^{\frac{n-2}{4}\a(r)}}\sup_{\partial \E(r)}{u(r,x)} = +\infty.   
\] 
\end{theorem}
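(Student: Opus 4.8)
The plan is to argue by contradiction and reduce the statement to the unique continuation result in Corollary \ref{cor: hyp} a), specialized to the drift-free inequality $|\Delta u|\le q_1|u|$, i.e. with $q_2\equiv 0$ and $\beta=1$. Suppose the conclusion fails, so that there exist constants $C,r_1>0$ with
\[
e^{\frac{n-2}{4}\a(r)}\sup_{\partial\E(r)}u(r,x)\le C,\qquad r\ge r_1,
\]
which means $0<u(r,x)\le C\,e^{-\frac{n-2}{4}\a(r)}$ on $\E(r_1)$. The idea is that this decay is more than exponential (since $\a(t)/t\to+\infty$), so it will satisfy the hypothesis $|u(r,x)|=\CO(e^{-\tau r})$ for every $\tau\gg 1$ required by Corollary \ref{cor: hyp} a) with $\beta=1$; then that corollary forces $u\equiv 0$, contradicting $u>0$.

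The first step is to translate the prescribed scalar curvature equation into a differential inequality of the form \eqref{eq:ineq}. Recall the classical Yamabe-type transformation law: under $\tilde g=u^{4/(n-2)}g$ one has
\[
-\frac{4(n-1)}{n-2}\Delta u + S\,u = \tilde S\, u^{\frac{n+2}{n-2}},
\]
where $\Delta$ is the (negative) Laplacian of $g$ and $S$ the scalar curvature of $g$. Hence
\[
|\Delta u|\le \frac{n-2}{4(n-1)}\Big(|S| + |\tilde S|\,u^{\frac{4}{n-2}}\Big)\,u.
\]
On the Hyperbolic conical end $\E$, the background scalar curvature $S=S(r,x)$ is a smooth function of the warped-product data; using the upper bound $u\le C e^{-\frac{n-2}{4}\a(r)}\le C$ valid for $r\ge r_1$ one gets $u^{4/(n-2)}\le C'$ there, so the factor $|\tilde S|\,u^{4/(n-2)}$ is controlled by $\CO(e^{\a(r)})$. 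Choosing $q_1(r,x):=\frac{n-2}{4(n-1)}\big(\sup_{N}|S(r,\cdot)| + C'\sup_{N}|\tilde S(r,\cdot)|\big)$, which is a continuous nonnegative function with $q_1(r,x)=\CO(e^{\a(r)})$, we obtain $|\Delta u|\le q_1|u|$ on $\E(r_1)$ (and, by continuity of $u,S,\tilde S$, on all of $M$ after enlarging $q_1$ on the compact complement; alternatively one works on a manifold with the given end and invokes the statement on that end directly).

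The second step is the decay bookkeeping needed to feed Corollary \ref{cor: hyp} a). We need: (i) the pointwise decay $|u(r,x)|=\CO(e^{-\tau r})$ for all $\tau\gg 1$, which is immediate from $u\le Ce^{-\frac{n-2}{4}\a(r)}$ together with $\a(r)/r\to+\infty$ (so for any fixed $\tau$, $\frac{n-2}{4}\a(r)\ge \tau r$ for $r$ large); and (ii) the coefficient compatibility $q_1=\CO(r^{0})$ in the sense required by case a) — but here is the subtlety: case a) with $\beta=1$ asks for $q_1\equiv C_1 r^{0}=C_1$ constant, whereas our $q_1$ grows like $e^{\a(r)}$. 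One must therefore not quote the corollary verbatim but instead go back to Theorem \ref{th-meshkov} (equivalently, re-run the choice $\s(r)=\sinh r$, $G(r)=r$, $h_\tau(r)=2\tau r^{\beta}$ but now with an exponent $\beta>1$ large enough, or with $h_\tau(r)=2\tau\,\a(r)$-type weight). The clean route: pick $h_\tau$ so that $F_\tau$ dominates $\sqrt{q_1}$; since $q_1=\CO(e^{\a(r)})$ and we may take $h_\tau'(r)\sim 2\tau\a'(r)$ with $\a'$ eventually much larger than any power of $r$ (one can first replace $\a$ by a smooth nondecreasing majorant with the same growth and with $\a'(r)/r\to\infty$), the conditions \eqref{a:k2}, \eqref{e:k infty}, \eqref{ass: kq} are verified and \eqref{eq-decay} follows from the decay of $u$, while \eqref{eq-decay''} is handled by Corollary \ref{coro_meshkov} since $r|h_\tau'(r)|+q_1(r,x)=\CO(e^{\a(r)})$ is still $\CO$ of itself — more precisely one checks the mild growth hypothesis \eqref{mainass: cor} directly or keeps \eqref{eq-decay''} which holds trivially because $u$ decays super-exponentially. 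Then $u\equiv 0$, contradiction.

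The main obstacle I expect is precisely this last point: the theorem's hypotheses are phrased so that $q_1$ must be controlled by the available gap in the Carleman weight, and the growth $e^{\a(r)}$ of the potential is faster than the polynomial/exponential profiles used in the concrete Corollaries. The resolution is to exploit the freedom in choosing $h_\tau$ (and $G$) in Theorem \ref{th-meshkov}: because the solution decays super-exponentially we may afford a weight $h_\tau$ whose derivative grows like $\a'$, which simultaneously (a) keeps $\int u^2(1+k_{2,\tau})e^{h_\tau}<\infty$ thanks to $u=\CO(e^{-\frac{n-2}{4}\a})$ and the slack between $\frac{n-2}{4}$ and the size of $h_\tau$, and (b) makes $k_{1,\tau}\sim F_\tau^4$ (schematically) beat $q_1^2=\CO(e^{2\a})$. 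A secondary, purely technical nuisance is making $\a$ smooth and monotone with controlled derivative without changing its asymptotics; this is a routine smoothing/majorization that I would dispatch in one sentence. Once the weight is chosen, all the inequalities \eqref{a:k2}–\eqref{ass: kq} reduce to elementary asymptotic comparisons of $r\to+\infty$ behaviors, exactly as in the computations of Section \ref{section:concreteexamples}.
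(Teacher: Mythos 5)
Your setup (contradiction, Yamabe equation, reduction to $|\Delta u|\le q_1|u|$) is the right start, but you miss the one observation that makes the proof work, and your fallback route does not close. Under the contradiction hypothesis $u=\CO\bigl(e^{-\frac{n-2}{4}\a(r)}\bigr)$ you have $u^{\frac{4}{n-2}}=\CO\bigl(e^{-\a(r)}\bigr)$, so the nonlinear term satisfies $|\tilde S|\,u^{\frac{4}{n-2}}=\CO\bigl(e^{\a(r)}e^{-\a(r)}\bigr)=\CO(1)$; together with the boundedness of the background scalar curvature $S$ on the Hyperbolic cone (Bishop--O'Neill, $\s=\sinh r$, $N$ compact), this gives $|\Delta u|\le A\,u$ with a \emph{constant} $A$. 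The exponent $\frac{n-2}{4}$ in the statement is calibrated exactly for this cancellation. Since $\a(r)/r\to+\infty$ also gives $u=\CO(e^{-\tau r})$ for every $\tau$, Corollary \ref{cor: hyp} a) with $\b=1$, $q_2\equiv 0$ and constant $q_1$ applies verbatim and yields $u\equiv 0$, contradicting $u>0$. Instead, you only use the crude bound $u\le C$, keep a potential $q_1=\CO(e^{\a(r)})$, and then try to absorb it by re-running Theorem \ref{th-meshkov} with a weight $h_\tau$ built from $\a$.

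That fallback cannot be carried out. Condition \eqref{ass: kq} forces $k_{1,\tau}(r)$ to dominate $q_1^2(r)\sim e^{2\a(r)}$ uniformly in $r$ with a margin growing in $\tau$; since $k_{1,\tau}$ is essentially of size $F_\tau^3$ (with $G'=1$), this forces $h_\tau'(r)\gtrsim e^{\frac{2}{3}\a(r)}$ for all large $r$, hence $e^{h_\tau}\gtrsim \exp\bigl(c\int^r e^{\frac{2}{3}\a}\bigr)$, which grows far faster than $e^{\frac{n-2}{2}\a(r)}$ (already for $\a(r)=r^2$), so the integrability requirement \eqref{eq-decay} fails: the decay of $u$ is a \emph{fixed} rate, not one improving with $\tau$, and there is no ``slack'' to exploit. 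Your proposed choice $h_\tau'\sim 2\tau\a'$ fails for the opposite reason: then $\sup_r q_1^2/k_{1,\tau}\sim\sup_r e^{2\a(r)}/(\tau\a'(r))^3=+\infty$ for every $\tau$, so \eqref{ass: kq} is violated (and for $\tau>\frac{n-2}{4}$ even \eqref{eq-decay} fails). Finally, with an exponentially growing $q_1$ the hypothesis \eqref{mainass: cor} of Corollary \ref{coro_meshkov} is false, and \eqref{eq-decay''} does not ``hold trivially'' from the decay of $u$, since it is a condition on $|\nabla u|$, about which you have no information. All of these difficulties disappear once the cancellation above is noticed.
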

\begin{proof}
By the Yamabe equation we have 
\[
\Delta u-C(n)Su+C(n)\tilde S u ^{\frac{n+2}{n-2}}=0, \qquad\textrm{ where } C(n):=\frac{n-2}{4(n-1)}\,.
\] 
Thus, setting $C=C(n)$, it holds 
\begin{align*}
| \Delta u  | &\leq C   u  \left| -S - \tilde S  u ^{\frac{4}{n-2}}\right|\,.
\end{align*}
Now, the Bishop-O'Neill formulas for a warped product  $M= (r_1,r_2) \times_{ \s(r)} N$ tell us that
\begin{align}
 \sect(\nabla r \wedge X) &= -\frac{\s''}{\s}\,,\\
 \sect(X \wedge Y) &= \frac{\sect_{N}(X\wedge Y) - (\s')^{2}}{\s^{2}}\,, \nonumber
\end{align}
for all o.n. $X,Y \in TN$. In our case, since $N$ is compact and $\s(r) = \sinh(r)$ we have that $\C_{Hyp}(N)$ has bounded sectional, hence scalar, curvature. Therefore, using the assumption on $\tilde S$,
\begin{equation*}
 | \Delta u  | \leq C u   \left(1+ |\tilde S |u^{\frac{4}{n-2}} \right) \leq C u  \left( 1 + e^{\a(r)} u^{\frac{4}{n-2}}\right).
\end{equation*}
By contradiction, suppose that
 \[
 0 < u(r,x) = \CO \left(e^{-\frac{n-2}{4}\a(r)} \right).
 \]
 Then, there exists a constant $A>0$ such that
 \[
 | \Delta u | \leq  A u,\, \text{on }\E.
 \]
Since
 \[
u(r,x) \leq C_{\tau} e^{-\tau  r},\quad \forall \tau >0,
 \]
 by the unique continuation property at infinity of Corollary \ref{cor: hyp} we conclude
 \[
0<  u(x) \equiv 0,\quad \text{on }\E,
 \]
 a contradiction.
\end{proof}

\begin{remark}
    An analogous conclusion, with the same proof, can be obtained in other geometries. For instance, using Corollary \ref{cor: exotgeom} case $a)$, the Theorem applies to manifolds with a warped cylindrical end with warping function $\sigma(r)=e^{r^\b}$, $0<\b<1$, up to requiring that $\lim_{t \to +\infty} \a(t)/t^{\frac{4-\b}{3}} = +\infty.$\end{remark}

\subsection{Minimal graphs}

Carleman estimates, and corresponding unique continuation at infinity,  have natural applications to rigidity questions in submanifold theory, both in Riemannian and in weighted ambient spaces; see for instance \cite{Wa1, Wa2, De, Su1,Su2}. Rigidity, here, means that whenever a given end of the submanifold approaches, in the graphical sense and with a certain speed, a reference submanifold in the same category, then the end is in fact included in the reference object. This section aims at providing an instance of these rigidity phenomena. \smallskip

Given a domain $\Omega$ inside the Riemannian manifold $(M,g)$, we say that the (possibly bordered) two-sided hypersurface $\Sigma \subset M \times \rr$ is given graphically over $\Omega$ if there exists a smooth function $u: \Omega \to \rr$ such that $\Sigma= \Gamma_u(\Omega)$, where $\Gamma_u : \Omega \to M \times \rr$ is the isometric embedding $\Gamma_u(x)=(x,u(x))$. The mean curvature function $H$ of $\Sigma$ with respect to the upper pointing Gauss map is defined in terms of $u$ by the equation
\begin{equation}
    H = -\dive\left( \frac{\nabla u}{\sqrt{1+|\nabla u|^2}}\right).
\end{equation}
Here and throughout this section, unless otherwise specified, all the differential operators (gradient, Hessian, Laplacian, divergence and so on) are understood in the base space $(M,g)$.

The graphical hypersurface $\Sigma$ is minimal if $H \equiv 0$. In this case, we say that $u$ satisfies the minimal surface equation that can be written in the equivalent form
\begin{equation}
    \Delta u = \frac{\Hess(u)(\nabla u ,\nabla u)}{1+|\nabla u|^2}.
\end{equation}
In particular
\begin{equation}\label{MSE2}
    |\Delta u| \leq q\, |\nabla u|
\end{equation}
where
\begin{equation}\label{MSE3}
    q = |\Hess(u)||\nabla u|.
\end{equation}

From \eqref{MSE2} and \eqref{MSE3} it is clearly visible that, once the Hessian of the graphical function $u$ is bounded and a certain decay rate of $u$ propagates to its gradient, the unique continuation results developed in Section \ref{section:concreteexamples} apply and give corresponding rigidity conclusions.
Accordingly, the crucial step is represented by the following (possibly well known) result.
In its statement we shall implicitly assume the following very classical fact, \cite{JK, He}:\smallskip

On a complete Riemannian $(M,g)$ with positive injectivity radius $r_{\inj}(M)=i>0$ and bounded sectional curvature $|\sect| \leq S$ the $C^{1,\a}$-harmonic radius of $M$ with ``precision'' $Q>1$, denoted by $r_{\harm}(M)$, satisfies the estimate
\[
r_{\harm}(M) \geq 4r_{0}
\]
for some constant $r_{0}=r_{0}(i,S,\a,Q)>0$. Thus, if we let $g_{ij}$ and $g^{ij}$ to denote the coefficients of the metric tensor and those of its inverse, we have that, in harmonic coordinates within any ball $B_{r_{0}}(p)$, the following conditions are satisfied: (a) $Q^{-1} \d_{ij} \leq g_{ij} \leq Q \d_{ij}$ in the sense of quadratic forms; (b) both $g_{ij}$ and $g^{ij}$ are $C^{1,\a}$-uniformly bounded. Since the actual value of $Q$ is irrelevant in our setting, we shall always take $Q=2$.

\begin{theorem}\label{Th:mimalgraphestimate}
    Let $(M,g)$ be a complete Riemannian manifold of dimension $2 \leq n \leq 5$, with $C^1$-bounded geometry. Namely, $M$ has positive injectivity radius $r_{\inj}(M)=i>0$, bounded sectional curvature  $|\sect| \leq S$ and bounded covariant derivative of the Riemann tensor $|D\riem| \leq c$.\smallskip
    
    \noindent Let $\Sigma = \Gamma_u (\Omega)$ be a vertical minimal graph over $\Omega \subseteq M$ with $|u|\le L$ in $\Omega$. Then, for any $p \in \Omega$ and $0<4R<\min(\dist(p,\partial \Omega),r_{\harm}(M))$ there exists a constant $C=C(R,i,S,c,L)>0$ such that  $\| u \|_{C^{2}(B_{2R}(p))}\leq C$ and $u$ satisfies \eqref{MSE2} on $B_{2R}(p)$ with
\begin{equation}
0 \leq q(p) \leq C \sup_{B_{R}(p)}|u|.
\end{equation}
\end{theorem}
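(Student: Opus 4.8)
The plan is to derive, via elliptic regularity in harmonic coordinates, a uniform $C^{2,\a}$ bound for the graphical function $u$ on a slightly smaller ball, and then to observe that the quantity $q=|\Hess(u)|\,|\nabla u|$ is pointwise controlled by such a bound multiplied by a gradient factor, which is in turn controlled by $\sup |u|$ through an interior gradient estimate for the minimal surface equation.

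First I would fix $p\in\Omega$ and work in a harmonic coordinate chart around $p$ of radius $\ge 4R$, where by the assumed bounded geometry (positive injectivity radius, $|\sect|\le S$, $|D\riem|\le c$) the metric coefficients $g_{ij}$ and $g^{ij}$ are $C^{1,\a}$-bounded and comparable to the Euclidean metric with constant $Q=2$. In these coordinates the minimal surface equation becomes a quasilinear uniformly elliptic equation for $u$ of the form $a^{ij}(x,\nabla u)\,\partial^2_{ij}u + b(x,\nabla u)=0$, whose structural coefficients are controlled by $i,S,c$ on $B_{4R}(p)$. Since $|u|\le L$, the classical interior gradient estimate for the minimal surface equation (in the form due to, e.g., Bombieri--De Giorgi--Miranda / Korevaar, valid for $n\le 5$ — indeed in all dimensions — and readily adapted to the bounded-geometry setting via the harmonic chart) yields $\sup_{B_{3R}(p)}|\nabla u|\le C(R,i,S,c,L)$. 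More precisely, one gets the sharper scaling-type bound $\sup_{B_{R/2}(q)}|\nabla u|\le C\,\rho^{-1}\sup_{B_\rho(q)}|u|$ on balls $B_\rho(q)\subset B_{3R}(p)$, which is exactly what produces the factor $\sup_{B_R(p)}|u|$ in the final estimate.

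Next, with $\nabla u$ now bounded on $B_{3R}(p)$, the equation becomes a \emph{linear} uniformly elliptic equation for $u$ with coefficients $a^{ij}=a^{ij}(x,\nabla u(x))$ that are merely bounded and measurable a priori; De Giorgi--Nash--Moser gives $u\in C^{\a}$, hence $a^{ij}\in C^{\a}$, and Schauder interior estimates then upgrade $u$ to $C^{2,\a}$ on $B_{2R}(p)$ with $\|u\|_{C^{2,\a}(B_{2R}(p))}\le C(R,i,S,c,L)$; in particular $\|u\|_{C^2(B_{2R}(p))}\le C$ and $|\Hess(u)|\le C$ there. Combining with \eqref{MSE2}--\eqref{MSE3}, on $B_{2R}(p)$ we have $|\Delta u|\le q\,|\nabla u|$ with $q=|\Hess(u)|\,|\nabla u|\le C\,|\nabla u|$; evaluating at $p$ and invoking the gradient estimate on $B_R(p)$ (taking $\rho=R$, $q=p$, and absorbing $R^{-1}$ into $C$) gives $0\le q(p)\le C\sup_{B_R(p)}|u|$, as claimed.

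The main obstacle I anticipate is purely technical bookkeeping rather than conceptual: one must carry the bounded-geometry hypotheses through the harmonic-coordinate reduction carefully enough to see that every constant in the gradient estimate and in the Schauder estimate depends only on $R,i,S,c,L$ (and not on the point $p$ or on further derivatives of the metric), and that the dimensional restriction $n\le 5$ — coming from the place where the minimal surface equation gradient estimate is invoked in its most elementary form — is respected. A secondary subtlety is the transition from the geometric Hessian/Laplacian of $(M,g)$ to the Euclidean second derivatives in the chart: since the Christoffel symbols are $C^{\a}$-bounded (being built from first derivatives of $C^{1,\a}$ metric coefficients), this transition costs only a controlled additive term involving $\nabla u$, already bounded, so it does not affect the final form of the estimate.
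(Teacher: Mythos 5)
The key gap is the gradient bound you use to produce the factor $\sup_{B_R(p)}|u|$. The classical interior gradient estimate for the minimal surface equation (Bombieri--De Giorgi--Miranda, Korevaar, and Spruck's version in $M\times\rr$, which is what the paper cites) has the form $|\nabla u(p)|\le C_1\exp\bigl(C_2\,\sup_{B_\rho}|u|/\rho\bigr)$: the dependence on $\sup|u|/\rho$ is exponential, and this is sharp (Finn's examples), so the ``sharper scaling-type bound'' $\sup_{B_{R/2}(q)}|\nabla u|\le C\,\rho^{-1}\sup_{B_\rho(q)}|u|$ that you invoke in your last step is not a consequence of that estimate. A bound linear in $\sup|u|$ is genuinely a statement of \emph{linear} elliptic theory: it becomes available only after the uniform $C^{2}$ (or at least $C^{1,\a}$) control is in hand, at which point one regards the minimal surface equation as a linear uniformly elliptic equation with frozen, $C^{0,\a}$-bounded coefficients --- e.g.\ $\operatorname{div}(a\nabla u)=0$ with $a=(1+|\nabla u|^2)^{-1/2}$ --- and applies interior $C^{1}$-Schauder estimates on $B_R(p)$; this linearization is exactly the paper's final step, and your write-up never performs it, so the concluding inequality $q(p)\le C\sup_{B_R(p)}|u|$ rests on a false form of the gradient estimate. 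A second, smaller flaw sits in your bootstrap: ``De Giorgi--Nash--Moser gives $u\in C^\a$, hence $a^{ij}\in C^\a$'' is a non sequitur, because $a^{ij}$ depends on $\nabla u$, not on $u$; you need $\nabla u\in C^\a$ first, which in the quasilinear divergence-form setting is obtained by differentiating the equation and applying De Giorgi--Nash--Moser to the derivatives (standard $C^{1,\a}$ theory for uniformly elliptic quasilinear equations with bounded gradient). Both defects are repairable with standard tools, but as written the chain of implications breaks at the two places that matter most.

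It is also worth noting that your route to the Hessian bound differs from the paper's: you propose a purely PDE bootstrap in harmonic coordinates, which (once corrected) would not need the dimension restriction, whereas the paper bounds $|\Hess(u)|$ geometrically, via Schoen--Simon--Yau curvature estimates for the minimal hypersurface $\Sigma\subset M\times\rr$ together with the Hoffman--Spruck isoperimetric inequality; the hypothesis $2\le n\le 5$ enters precisely there, not in the gradient estimate (which, as you observe, holds in all dimensions). So your attribution of the dimensional restriction to the gradient estimate misidentifies its role in the paper's argument, even though in your (corrected) approach it would simply be unnecessary.
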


\begin{proof}
During the proof, each time the constant $C$ appears it is understood that it depends only on the parameters in the statement of the Theorem.\smallskip

 First of all, by  Spruck interior gradient estimates, \cite[Theorem 1.1]{Spruck}, it holds
\begin{equation}\label{gradientest}
\sup_{B_{R}(y)} |\nabla u | \leq C,\quad \forall y \in B_{2R}(p).
\end{equation}
Next, we show that $|\Hess(u)|$ is uniformly bounded. To this end, thanks to \eqref{gradientest}, it is enough to prove that $\Sigma$ has uniformly bounded second fundamental form $A$, because
\begin{equation}
 \Hess(u) =\pm \sqrt{1+|\nabla u|^{2}} \, A( d\Gamma_{u} , d\Gamma_{u}).
\end{equation}
But this essentially follows from  classical work by Schoen-Simon-Yau, \cite{SSY}. Indeed, $\Sigma$ is a minimal hypersurface inside the complete Riemannian manifold $(M\times \rr, g + dt^2)$ that inherits from $M$ the same $C^1$-bounds $i,S,c$ on its geometry.  Since $\Sigma= \Gamma_{u}(\Omega)$ is graphical with bounded slope $|\nabla u | \leq C$ and $B^{\Sigma}_{2\ve}(\Gamma_{u}(p)) \subseteq B_{2\ve}(p) \times \rr$, we have that 
\begin{equation}\label{volumeestimate}
 \vol(B^{\Sigma}_{2\ve}(\Gamma_{u}(p)) \leq \int_{B_{2\ve}(p)} \sqrt{1+|\nabla u|^{2}} \, d\mu_{M}\leq C \,  \vol (B_{2\ve}(p)) \leq C  \, \vol (B_{2\ve}^{\hh^{n}_{-S}}).
\end{equation}
 Thus, if $0<\ve< \tilde C$ for a suitable constant $0<\tilde C = \tilde C(S,i)<R$, then the right hand side of \eqref{volumeestimate} can be made so small that $\Sigma$ enjoys the Euclidean isoperimetric inequality by Hoffman-Spruck, \cite{HS}, at the scale $2\ve$. We can therefore adapt the argument in \cite[Theorem 3]{SSY} (see Lemma \ref{lemma:SSY} below) to deduce that, up to enlarging $C$ (but with the same dependence on the parameters),
\begin{equation}
 \sup_{B_{\ve}(y)} |A| \leq C,\quad \forall y \in B_{2R}(p).
\end{equation}
We have thus obtained that
\begin{equation}
\| u \|_{C^{2}(B_{2R}(p))} \leq C.  
\end{equation}
It follows that, letting
\[
a = (1+|\nabla u|^{2})^{-1/2},
\]   
the linear operator in divergence form
\[
\CL w = \div (a \nabla w) = \frac{1}{\sqrt{\det g}} \partial_{i}\left( a \, \sqrt{\det g} \, g^{ik} \,\partial_{k}w \right)
\]
is uniformly elliptic with  uniformly $C^{0,\a}$-bounded coefficients. Since $u$ solves $\CL u = 0$ on $B_{2R}(p)$, we can apply $C^{1}$-Schauder estimates, \cite[Theorem 2.28]{FR}, and deduce that
\begin{equation}
\sup_{B_{R/2}(p)} |\nabla u | \leq C^{\ast} \sup_{B_{R}(p)} |u|
\end{equation}
where $C^{\ast}$ depends only on $n$ and on $\| u \|_{C^{2}(B_{2R}(p))}$, hence only on $C$.
\end{proof}

For the sake of completeness, we  outline the proof of the needed version of \cite[Theorem 3]{SSY}.

\begin{lemma}\label{lemma:SSY}
 Let $(N,g)$ be an $n$-dimensional complete manifold, $2\leq n \leq 5$, with $C^{1}$-bounded curvature: $|\sect|  \leq S$ and $|D \riem| \leq c$. Let $B^{\Sigma}_{2R}(p) \hookrightarrow N$ be a relatively compact embedded minimal $n$-disk with $2R < r^{N}_{\inj}(p)$. Let $0 < \ve \ll 1$ be so small that, for every $q \in B_{R}^{\Sigma}(p)$, $B^{\Sigma}_{2\ve}(q)$ enjoys the Hoffman-Spruck Euclidean isoperimetric inequality. Then, there exists a constant $C= C(R,S,c, \ve)>0$ such that
 \[
\sup_{B^{\Sigma}_{\ve}(q)} |A| \leq C, \quad \forall q \in B_{R}^{\Sigma}(p).
 \]
\end{lemma}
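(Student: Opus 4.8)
The plan is to establish the interior curvature estimate of Schoen--Simon--Yau in the form needed here, adapting \cite[Theorem 3]{SSY} to the setting of a minimal disk in an ambient manifold of $C^1$-bounded geometry, where the only extra care is bookkeeping the dependence of constants on $S$, $c$ and $\varepsilon$. First I would recall the basic tools: the Simons-type identity for a minimal hypersurface $\Sigma^n \hookrightarrow N^{n+1}$, which gives
\[
\Delta_\Sigma |A|^2 \geq -C_1(S)\,|A|^2 - C_2(S,c)\bigl(1+|A|\bigr) + 2|\nabla^\Sigma A|^2 + \frac{2}{n}|A|^4
\]
(the ambient curvature and its derivative enter only through the lower-order terms $C_1,C_2$, using $|\sect_N|\le S$ and $|D\riem_N|\le c$), together with the improved Kato inequality $|\nabla^\Sigma A|^2 \geq \bigl(1+\tfrac{2}{n}\bigr)|\nabla^\Sigma|A||^2$ valid for minimal hypersurfaces. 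From these one derives, for the function $v = |A|$, a differential inequality of the form $v\,\Delta_\Sigma v + \bigl(1+\tfrac{2}{n}\bigr)|\nabla^\Sigma v|^2 + \tfrac{2}{n}v^4 \le C(S,c)\,(1+v^2)$, which is the exact analogue of the inequality underlying the $L^p$-estimate in \cite{SSY}.

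Next I would run the Schoen--Simon--Yau iteration on $B^\Sigma_{2\varepsilon}(q)$. The crucial structural feature exploited in \cite{SSY} is the dimensional restriction $n \leq 5$ (more precisely $n < 2 + \sqrt{8/n}\cdot\text{something}$, which holds for $n\le 5$), which makes it possible to test the Simons inequality against $v^{2q}\eta^2$ for a suitable exponent $q$ and absorb the bad gradient terms using the Kato inequality; this yields an a priori bound
\[
\int_{B^\Sigma_\varepsilon(q)} |A|^{2q}\,d\mu_\Sigma \leq C\bigl(\varepsilon, S, c, n\bigr)
\]
once one knows the volume growth $\vol\bigl(B^\Sigma_\rho\bigr) \leq C\rho^n$ for $\rho\le 2\varepsilon$. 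This volume bound is precisely what the graphical hypothesis provides in the application, and at the level of the Lemma it is built into the hypothesis that $B^\Sigma_{2\varepsilon}(q)$ satisfies the Hoffman--Spruck Euclidean isoperimetric inequality \cite{HS}: combined with the minimality (so that $\partial(B^\Sigma_\rho)$ controls $\tfrac{d}{d\rho}\vol(B^\Sigma_\rho)$ via the coarea formula and the monotonicity-type consequence of isoperimetry), this forces Euclidean volume growth at scales $\le 2\varepsilon$. With the $L^{2q}$ bound on $|A|$ in hand and $2q > n/2$ — which again is where $n\le 5$ is used — a Moser/De Giorgi iteration applied to the same Simons inequality upgrades the integral estimate to the pointwise bound $\sup_{B^\Sigma_\varepsilon(q)}|A|\le C$.

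The main obstacle, and the place requiring the most care, is making every constant in the iteration depend only on $R$, $S$, $c$ and $\varepsilon$ and not on the geometry of $\Sigma$ itself (which is a priori unknown). This is handled by noting that (i) the ambient Sobolev/isoperimetric constant at scale $2\varepsilon$ is universal by the Hoffman--Spruck hypothesis; (ii) the coefficients $C_1(S)$, $C_2(S,c)$ in the Simons inequality depend only on the ambient bounds; and (iii) the number of iteration steps and the accumulated constant depend only on $n$, on the ratio of scales, and on these fixed data. One must also ensure that $2\varepsilon < r^N_{\inj}(p)$ and $2\varepsilon$ is small enough that $B^\Sigma_{2\varepsilon}(q)\subset B^\Sigma_{2R}(p)$ for all $q\in B^\Sigma_R(p)$, which is guaranteed by $2R < r^N_{\inj}(p)$ together with the triangle inequality once $\varepsilon < R/2$. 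Modulo these bookkeeping points, the argument is the standard one and I would present it at the level of detail of \cite{SSY}, citing the Simons identity and the Hoffman--Spruck inequality as black boxes.
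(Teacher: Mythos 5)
Your overall strategy (re-run the Schoen--Simon--Yau argument on $B^{\Sigma}_{2\ve}(q)$, tracking constants) is the same as the paper's, but the central step as you describe it has a genuine gap: the $L^{2q}$ curvature bound cannot be obtained by ``testing the Simons inequality against $v^{2q}\eta^2$ and absorbing the bad gradient terms using the Kato inequality.'' In Simons' inequality for a minimal hypersurface the quartic term carries the \emph{unfavorable} sign, $\Delta_\Sigma |A|^2 \geq 2|\nabla^\Sigma A|^2 - 2|A|^4 - C(S,c)(1+|A|^2)$ (your first display has $+\tfrac{2}{n}|A|^4$, and your inequality for $v=|A|$ points the wrong way; as written it would give a pointwise bound at an interior maximum in every dimension, which is false), so integrating Simons against $v^{2q}\eta^2$ produces $+\int \eta^2 v^{2q+4}$ on the bad side with nothing to absorb it. In \cite{SSY} that term is controlled by substituting $v^{q+1}\eta$ into the \emph{stability inequality}, and it is precisely the interplay between stability and Simons (not the Kato inequality alone) that yields the admissible range of exponents and hence the restriction $n\le 5$. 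You never invoke stability, and without it no such $L^p$ estimate holds for general minimal hypersurfaces (think of rescaled catenoidal necks: bounded area, unbounded $|A|$). The paper's proof is structured differently at the technical level --- it linearizes by setting $u=\b^2+|A|^2$ with $\b=\b(S,n,c)$ large, so that $\Delta u + fu\ge 0$ with $f=6nS+2|A|^2$, applies the mean value inequality \cite[Lemma 19.1]{Li} using the Sobolev inequality coming from the Hoffman--Spruck hypothesis, and then defers the needed integral control of $f$ to \cite[Theorem 3]{SSY} --- but that citation carries the stability hypothesis with it (satisfied in the intended application, since minimal graphs are stable). Your write-up must either assume stability or explain where it comes from; otherwise the key estimate is unsupported.

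A secondary error: the Hoffman--Spruck isoperimetric inequality gives a \emph{lower} (non-collapsing) volume bound $\vol B^{\Sigma}_{\rho}\ge c\,\rho^n$ at scales $\le 2\ve$, which is exactly what is needed for the scale-invariant Sobolev inequality and the Moser/mean-value step; it does \emph{not} force the upper bound $\vol B^{\Sigma}_{\rho}\le C\rho^n$ you claim, since minimality plus isoperimetry do not prevent large area inside a small extrinsic ball. To get constants independent of $\Sigma$ one should either work with volume-averaged norms (as the paper does: the mean value constant depends on an averaged $L^{t}$-norm of $f$ with $t>n/2$, and the SSY integral estimate is itself volume-normalized) or import the area bound from the graphical structure, as in the proof of Theorem \ref{Th:mimalgraphestimate} (see \eqref{volumeestimate}).
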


\begin{proof}
From (1.27) in \cite{SSY} with $K_{1}=S$, $K_{2}= -S$ we know that
 \[
 \Delta |A|^{2} \geq -4c|A| - 6n S |A|^{2}- 2|A|^{4}.
\]
 Fix a constant $\b>0$ whose value will be specified later and let $u = \b^{2}+ |A|^{2}$.  Then,
\begin{align*}
 \Delta u &\geq -4c|A| - 6n S |A|^{2} - 2|A|^{4} \\
 &= -(6nS+2 |A|^{2}) u + 2 (\b |A| - c\b^{-1})^{2} + (6 nS \b^{2} - 2c^{2}  \b^{-2})\\
 &\geq -(6nS+2 |A|^{2}) u 
\end{align*}
provided $\b= \b(S,n,c) \gg1$ is so large that $6nS \b^{2} - 2c^{2}  \b^{-2} \geq 0$. We have thus shown that $u \geq 0$ is a smooth solution of
\[
\Delta u + f u \geq 0,\quad f = 6nS+2 |A|^{2}. 
\]
Now, thanks to the Euclidean isoperimetric inequality, which implies an Euclidean lower volume bound, in $B_{\ve}^{\Sigma}(p)$ we have the validity of a scale-invariant Sobolev inequality. Therefore, we can apply Euclidean methods to get a mean value inequality of the form (see e.g. \cite[Lemma 19.1]{Li})
\[
\sup_{B^{\Sigma}_{\ve}(q)}  |A|^{2} \leq \sup_{B^{\Sigma}_{\ve}(q)} u \leq \frac{D}{\vol B^{\Sigma}_{3\ve/2}(q) }\int_{{B^{\Sigma}_{3\ve/2}(q)} } u = D\b^{2} + \frac{D}{\vol B^{\Sigma}_{3\ve/2}(q) }\int_{{B^{\Sigma}_{3\ve/2}(q)} }  |A|^{2}
\]
where, for any fixed $t > n/2$, the constant $D>0$ depends on the average $L^{2t}$-norm of $f$. Since volumes at the scale $2\ve$ are Euclidean, the proof now can be completed exactly as in \cite[Theorem 3]{SSY}.
\end{proof}

 Note that, if $\Omega = B_{\d}(D)$ is a $\d$-uniform tubular neighborhood of a domain $D\subseteq M$, then the radius $R$ in Theorem \ref{Th:mimalgraphestimate} can be chosen uniformly, independent of the center $p \in D$.
In this setting, interesting examples, where the asymptotic decay of $u$ is inherited by $q$, are the following.

\begin{example}[halfspaces or halfcylinders]
Let $(N,g)$ be a complete manifold with $C^1$-bounded geometry and let $u: (R,+\infty) \times N \to \rr$  define a bounded minimal graph. Given any $f(r) \searrow 0^{+}$, if $\sup_{x \in N}| u(r,x) | \leq f(r)$ for all $r>R$ then $\sup_{x \in N} | q (r,x) |  \leq  C\, f(r-1) $ for $r> R+1$. 
\end{example}

\begin{example}[exterior domains]
 Let $(M,g)$ be a complete manifold with $C^1$-bounded geometry and let $u: M \setminus \bar B_{R}(o) \to \rr$ define a bounded minimal graph. Given any $f(r) \searrow 0^{+}$, if  $\sup_{\partial B_{r}(o)}| u | \leq  f(r)$ for all $r>R$ then $\sup_{\partial B_{r}(o)} | q  | \leq  C\,f(r-1)$ for  $r >R+1$.
\end{example}

\begin{example}[conical ends]\label{example:minimal-conical}
Let $\E(R)$ be an end of either $\C_{Eu}(N)$ or $\C_{Hyp}(N)$ and let $u(r,x): \E(R) \to \rr$ define a bounded minimal graph. Given any $f(r) \searrow 0^{+}$, if $\sup_{x \in N}| u(r,x) | \leq f(r)$ for all $r>R$ then $\sup_{x \in M} | q (r,x) |  \leq  C f(r-1) $ for $r>R+1$.\smallskip

\noindent Indeed, we have already observed that, as a consequence of Bishop-O'Neill, the cone has bounded sectional curvature. The covariant derivative of Riemann is also  bounded. To see this, for a warped product $(M^n,g) = ((r_1,r_2) \times N , dr\otimes dr + \s^2(r) g_N)$, 
we fix the index convention $i,j,k,t,s=1,\cdots,n-1$ and, according to Bishop-O'Neill, we start by writing in local coordinates $x_0 = r,x_1,\cdots,x_{n-1}$ the nonzero Christoffel symbols:

\begin{align*}
    \Gamma^k_{ij} &= (\Gamma^N)^k_{ij}\\
    \Gamma^0_{ij} &= -\frac{\s'}{\s}g_{ij}\\
    \Gamma^k_{0,j} &= \frac{\s'}{\s} \d_{kj}
\end{align*}
and the nonzero components of the Riemann tensor:
\begin{align*}
    R_{i0j0} &= - \frac{\s''}{\s} \frac{1}{2}(g \KN g)_{i0j0} \\
    R_{ijkt} &= R^N_{ijkt} + \left(\frac{\s'}{\s}\right)^2 \frac{1}{2}(g \KN g)_{ijkt}
\end{align*}
where $\KN$ denotes the Kulkarni-Nomizu product. 
Recall that the covariant derivatives of the Riemann tensor write, for $a,b,c,d,e,...=0,\cdots,n-1,$
\[
D_aR_{bcde}=\partial_{x_a}R_{bcde} - \Gamma_{ab}^fR_{fcde}- \Gamma_{ac}^fR_{bfde}- \Gamma_{ad}^fR_{bcfe}- \Gamma_{ae}^fR_{bcdf},
\]
while its tensorial norm is given by 
\[
|D\riem|^2=D_aR_{bcde}\,D_{\bar a} R_{\bar b\bar c \bar d\bar e}g^{a\bar a}g^{b\bar b}g^{c\bar c}g^{d\bar d}g^{e\bar e}.
\]
Since the Riemann tensor of $M$, the Christoffel symbols of $M$, the derivatives of the Riemann tensor of $N$ and the quantities $(\sigma'/\sigma)$ and $(\sigma'/\sigma)'$  are all bounded, a straightforward computation shows that $|D\riem|$ is bounded in $\E(R)$, as claimed.\\
Now, we can think of $\E(R)$ as an end of the Riemannian double $\D(\E(R/2))$ which is a complete Riemannian manifold that still has $C^1$-bounded curvature. It is not difficult to see that the injectivity radius of $M=\D(\E(R/2))$ is positive. For instance, one can note that $\vol (B^M_1)$ does not collapse at infinity and apply a classical result by Cheeger-Gromov-Taylor, \cite{CGT}.
\end{example}

According to Example \ref{example:minimal-conical}, we have the following direct geometric consequence of Corollary \ref{cor: hyp}. 

\begin{theorem}\label{th:hyperbolicminimalgraph}

    Let $\Sigma = \Gamma_u(\E)\subset \C^n_{Hyp}(N)\times \rr$, $2 \leq n\leq 5$, be a vertical minimal graph over the Hyperbolic conical end $\E$ and let $H\in\rr$. If $\Sigma$ converges to a totally geodesic slice $\C^n_{Hyp}(N) \times \{H\}$ more than exponentially, then
    \[
    \Sigma \subseteq \C^n_{Hyp}(N) \times \{H\}.
    \]
\end{theorem}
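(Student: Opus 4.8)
The strategy is to reduce the statement to an application of the unique continuation result in Corollary \ref{cor: hyp}, via the minimal graph estimate of Theorem \ref{Th:mimalgraphestimate} and the observations collected in Example \ref{example:minimal-conical}. First I would set $v := u - H$, where $u : \E \to \rr$ is the graphical function of $\Sigma$. Since the slice $\C^n_{Hyp}(N) \times \{H\}$ is totally geodesic, it is itself a minimal graph (the constant function $H$), so the difference $v$ measures exactly the graphical distance of $\Sigma$ from the slice; the hypothesis ``$\Sigma$ converges to the slice more than exponentially'' translates into the pointwise decay $\sup_{x \in N} |v(r,x)| = \CO(e^{-\tau r})$ for every $\tau \gg 1$. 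Note that $v$ satisfies the same minimal surface equation as $u$, hence $|\Delta v| \le q\,|\nabla v|$ with $q = |\Hess(v)|\,|\nabla v|$ as in \eqref{MSE2}--\eqref{MSE3}, all operators taken in the base $\C^n_{Hyp}(N)$.

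Next I would invoke the $C^1$-bounded geometry of the Hyperbolic cone established in Example \ref{example:minimal-conical}: the cone has bounded sectional curvature and bounded $|D\riem|$, and, passing to the Riemannian double $\D(\E(R/2))$, one gets a complete ambient manifold with $C^1$-bounded geometry and positive injectivity radius. Theorem \ref{Th:mimalgraphestimate} then applies uniformly on the tubular-type region $\E(R+1)$ and yields a uniform bound $\|v\|_{C^2} \le C$ together with the crucial propagation estimate $0 \le q(r,x) \le C\sup_{B_R(r,x)}|v|$. Combining this with the exponential decay of $v$ gives $\sup_{x \in N} q(r,x) \le C f(r-1)$ with $f(r) = \CO(e^{-\tau r})$ for all $\tau \gg 1$ (this is precisely the content of Example \ref{example:minimal-conical}); in particular $q(r,x) = o(1)$, so certainly $q(r,x) \le C_2 r^{(\beta-1)/2}$ for $\beta = 1$ with $C_2$ any fixed positive constant — indeed $q$ is bounded, which is all that is needed. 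Thus $v$ solves $|\Delta v| \le q\,|\nabla v|$ on the Hyperbolic conical end with $q \in C^0$ bounded and $|v(r,x)| = \CO(e^{-\tau r})$ for every $\tau \gg 1$, so Corollary \ref{cor: hyp} a) with $\beta = 1$, $C_1 = 0$ applies and forces $v \equiv 0$ on $\E$, i.e. $u \equiv H$ and $\Sigma \subseteq \C^n_{Hyp}(N) \times \{H\}$.

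The only point requiring care — and the one I expect to be the main obstacle — is making precise what ``converges to the slice more than exponentially'' should mean and checking it is correctly captured by the hypotheses of Corollary \ref{cor: hyp} a). The cleanest reading is the pointwise graphical one, $\sup_{x\in N}|u(r,x)-H| = \CO(e^{-\tau r})$ for all $\tau \gg 1$; with this reading the argument above is routine. If instead one only assumes a weaker Hausdorff-type or $C^0$-on-compacta convergence, one would need an intermediate step upgrading it to the graphical pointwise decay, using again the interior gradient and Hessian estimates of Theorem \ref{Th:mimalgraphestimate} together with the maximum principle; but in the setting of vertical graphs over $\E$ the graphical distance is simply $|u-H|$, so I would adopt the pointwise convergence as the definition and the reduction is immediate. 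A secondary (but entirely routine) point is verifying that the hypothesis \eqref{eq-decay} of Theorem \ref{th-meshkov}, hidden inside Corollary \ref{cor: hyp}, is implied by the exponential pointwise decay: since $N$ is compact and $\mu_M = \sinh^{n-1}(r)\,dr\,d\mu_N$, the weight $e^{h_\tau} = e^{2\tau r}$ is beaten by $e^{-2\tau' r}$ for $\tau' \gg \tau$, so the integral $\int u^2(1+k_{2,\tau})e^{h_\tau}\,d\mu_M$ converges, and likewise \eqref{mainass: cor} holds since $h_\tau' $, $q_1 \equiv 0$ and $q_2$ are all $\CO(r^2)$ (in fact bounded). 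This closes the argument.
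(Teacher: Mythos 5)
Your proposal is correct and follows essentially the same route as the paper, which presents this theorem as a direct consequence of Corollary \ref{cor: hyp} a) (with $\beta=1$, $C_1=0$) applied to $v=u-H$, using Theorem \ref{Th:mimalgraphestimate} and Example \ref{example:minimal-conical} to get the bounded (indeed decaying) coefficient $q$ and noting that the faster-than-exponential pointwise decay implies the weighted integrability \eqref{eq-decay} against the volume $\sinh^{n-1}(r)\,dr\,d\mu_N$. Your observations that $\nabla v=\nabla u$, $\Hess(v)=\Hess(u)$, and that the Schauder/gradient estimates propagate the decay of $v$ to $q$ are exactly the implicit steps in the paper's argument.
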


\begin{remark}
    A catenoid-like hypersurface in $\hh^n \times \rr$, e.g. \cite{IP}, is the graphical hypersurface $\Sigma= \Gamma_u(\hh^n\setminus B_1(o))$ defined, in polar coordinates of $\hh^n$ around $o$, by the function
    \[
    u(r,x) = \int_1^{r} \frac{\sinh^{n-1}(1)}{\sqrt{\sinh^{2(n-1)}(s)-\sinh^{2(n-1)}(1)}}ds.
    \]
    Since $\Sigma$ is a non-trivial minimal graph converging exponentially to a totally geodesic slice, Theorem \ref{th:hyperbolicminimalgraph} is essentially sharp.
\end{remark}

\begin{remark}
    The same result can be obtained if we replace the hyperbolic end with a warped product end of a Cartan-Hadamard manifold with pinched negative curvature. 
\end{remark}

\medskip

\subsection*{Acknowledgements.} We would like to thank the anonymous referee for corrections and suggestions which improved the presentation of this paper. All the authors are members of the INdAM-GNAMPA groups. The first author is supported by the INdAM-GNAMPA project “Mancanza di regolarità e spazi non lisci: studio di autofunzioni e autovalori”, CUP E53C23001670001.

\end{document}